\tikzset{negated/.style={
		decoration={markings,
			mark= at position 0.5 with {
				\node[transform shape] (tempnode) {$\bigtimes$};
			}
		},
		postaction={decorate}
	}
}
\numberwithin{equation}{section}
\theoremstyle{plain}
\newtheorem{theorem}{Theorem}[section]
\newtheorem{corollary}[theorem]{Corollary}
\newtheorem{lemma}[theorem]{Lemma}
\theoremstyle{definition}
\newtheorem{definition}[theorem]{Definition}
\newtheorem{example}[theorem]{Example}
\theoremstyle{remark}
\newtheorem{remark}[theorem]{Remark}
\newcommand\eps{\ensuremath{\varepsilon}}
\newcommand\g{\ensuremath{\mathbf{g}}}
\newcommand\R{\ensuremath{\mathbb{R}}}
\renewcommand\le{\leqslant}
\renewcommand\ge{\geqslant}
\newcommand\bel[1]{\begin{equation}\label{#1}}
\newcommand\ee{\end{equation}}
\newcommand{\Con}{\mathcal{C}}
\newcommand{\copw}{\mathcal{C}^1_{\mathrm{pw}}}
\newcommand{\vphi}{\varphi}
\newcommand{\cinfty}{\mathcal{C}^\infty}
\newcommand{\sse}{\subseteq}
\begin{document}
\title{The future is not always open}
\author{James D.E.\ Grant}
\email{j.grant@surrey.ac.uk}
\address{Department of Mathematics\\ University of Surrey\\ UK.}
\author{Michael Kunzinger}
\email{michael.kunzinger@univie.ac.at}
\address{Faculty of Mathematics\\ University of Vienna\\ Austria.}
\author{Clemens S\"amann}
\email{clemens.saemann@univie.ac.at}
\address{Faculty of Mathematics\\ University of Vienna\\ Austria.}
\author{Roland Steinbauer}
\email{roland.steinbauer@univie.ac.at}
\address{Faculty of Mathematics\\ University of Vienna\\ Austria.}
\date{\today}

\begin{abstract}
We demonstrate the breakdown of several fundamentals of Lorentzian causality theory in low regularity. 
Most notably, chronological futures (defined naturally using locally Lipschitz curves) may be non-open, and may differ from the corresponding
sets defined via piecewise $\mathcal{C}^1$-curves. By refining the notion of a causal bubble from \cite{CG:12},
we characterize spacetimes for which such phenomena can occur, and also relate these to
the possibility of deforming causal curves of positive length into timelike curves (\emph{push-up}).
The phenomena described here are, in particular, relevant for recent synthetic approaches to
low regularity Lorentzian geometry where, in the absence of a differentiable structure,
causality has to be based on locally Lipschitz curves.
\end{abstract}
\keywords{Causality theory, low regularity, chronological future, causal bubbles}
\subjclass[2010]{53C50; 83C75}
\thanks{This work was supported by research grant P28770 of the Austrian Science Fund FWF, and WTZ Project No. CZ 12/2018 of OeAD}
\maketitle
\thispagestyle{empty}

\section{Introduction}

Until a decade ago Lorentzian causality theory had mostly been studied 
under the assumption of a smooth spacetime metric. However, with the analytic 
viewpoint on general relativity becoming more and more prevalent, issues of 
regularity have increasingly come to the fore. In particular, while the geometric 
core of the theory is traditionally formulated in the smooth category, the analysis of the 
initial value problem, for example, requires one to consider metrics of low regularity. 

Beginning in 2011 with the independent works of~\cite{FS:12} and~\cite{CG:12}, a systematic study of Lorentzian causality theory under low 
regularity assumptions began and has brought to light several, sometimes 
surprising, facts. In a nutshell, while some features of causality theory are 
rather robust and topological in nature, other results are usually proved by 
local arguments involving geodesically convex neighbourhoods, which do exist for 
$\mathcal{C}^{1,1}$-metrics~\cite{Min:15,KSS:14}. Consequently for this regularity class 
the bulk of causality theory~\cite{CG:12,Min:15,KSSV:14} including the 
singularity theorems~\cite{KSSV:15,KSV:15,GGKS:18} remains valid. Moreover, 
arguments from causality theory which neither explicitly involve the exponential 
map nor geodesics have been found to extend to locally Lipschitz metrics, see~\cite[Thm.\ 1.25]{CG:12},~\cite[Rem.\ 2.5]{Min:19}. 
However, below $\mathcal{C}^{1,1}$, explicit counterexamples show that convex neighbourhoods may no longer exist 
for metrics of H\"older regularity $\mathcal{C}^{1,\alpha}$ for any $\alpha<1$~\cite{HW:51,SS:18}. 

Moreover, Lorentzian causality theory has been generalized to a theory of 
cone structures, i.e., set-valued maps that assign a cone in the tangent space 
to each point on a manifold. This setting allows one to develop those aspects of 
the theory that merely depend on topological arguments under weak regularity 
assumptions. At the same time, this has led to the introduction of new methods to the 
field: Using weak KAM theory, existence results for smooth time functions 
have been derived for $\mathcal{C}^0$-cone structures in~\cite{FS:12, F:15}, while in~\cite{BS:18} dynamical systems theory has been employed to derive similar 
results for merely upper semi-continuous cone structures. In a landmark paper~\cite{Min:19}, Minguzzi has extended these studies using methods from set-valued 
analysis, convexity and order topology to develop large parts of causality theory for locally Lipschitz cone structures.

Returning to the metric spacetime setting, some results, such as the Avez--Seifert theorem,  
have been extended to $\mathcal{C}^0$-Lorentzian metrics~\cite{Sae:16}. Moreover, a number of fundamental 
works on the $\mathcal{C}^0$-extendibility of spacetimes~\cite{Sbi:18,Sbi:Proc,GLS:18} have appeared recently. However, already in~\cite{CG:12} it was observed that 
two key features of causality may fail for metrics of regularity below Lipschitz:
\begin{enumerate}
\item The push up principle, i.e., $\left( I^+\circ J^+ \right) \cup \left( J^+\circ I^+ \right) \subseteq I^+$, may cease 
to hold\footnote{Here $I^+$ and $J^+$ denote the 
chronological and the causal relation, respectively, and e.g.\ $I^+\circ J^+ =\{(p,r)\in M \times M: 
\exists q\in M$ with $(p,q)\in J^+$ and $(q,r)\in I^+ \}$.};  
\item Light cones may cease to be hypersurfaces as they ``bubble up'' to have 
a nonempty interior.
\end{enumerate}
Moreover, some fundamental questions raised in~\cite{CG:12} remained open, 
especially the openness of the chronological future $I^+(p)$ of a point $p$. In 
this paper, we answer this question (in the negative) and  clarify its
relation to points (1) and (2) above. This leads to a number of natural 
questions concerning the relation between causal and timelike futures and
pasts when defined using various classes of curves (like Lipschitz or piecewise $\mathcal{C}^1$).
To answer these, we construct several counterexamples that exhibit a further
breakdown of standard causality properties beyond those already investigated
in \cite{CG:12}.

Classical accounts on causality theory mainly use piecewise $\mathcal{C}^1$-curves. 
It has to be noted, however, that limit curve theorems, which form a 
fundamental tool in causality theory, fail to respect this regularity class. This has 
led Chru\'sciel in~\cite{Chr:11} to base smooth causality theory entirely on 
the class of locally Lipschitz curves, an approach that has been adopted in most 
of the subsequent studies in low regularity. In fact, Chru\'sciel put it as follows in~\cite[p.\ 14]{Chr:11}:\\

{\it ``In previous treatments of causality theory~\cite{BEE:96,Ger:70,HE:73,ONe:83,Pen:72b,Wal:84} one defines future
directed timelike paths as those paths $\gamma$ which are piecewise differentiable, with $\dot\gamma$ timelike and future 
directed wherever defined; at break points one further assumes that both the left-sided and right-sided derivatives are 
timelike. This definition turns out to be quite inconvenient for several purposes. For instance, when studying
the global causal structure of space-times one needs to take limits of timelike curves,
obtaining thus --- by definition --- causal future directed paths. Such limits will
not be piecewise differentiable most of the time, which leads one to the necessity
of considering paths with poorer differentiability properties. One then faces the
unhandy situation in which timelike and causal paths have completely different
properties. In several theorems separate proofs have then to be given. The approach
we present avoids this, leading --- we believe --- to a considerable simplification of
the conceptual structure of the theory.''}\\

Besides the above, a strong reason for basing causality theory entirely on
locally Lipschitz curves comes from the desire to develop synthetic methods
in Lorentzian geometry. In fact,
following the pioneering works~\cite{Har:82,AB:08}, recently methods from synthetic geometry and, in particular, length spaces have been implemented in the 
Lorentzian setting and in causality theory~\cite{KS:18,GKS:18}. In this framework, which generalizes low regularity 
Lorentzian geometry to the level of metric spaces, $\mathcal{C}^1$-curves are not available. Thus one needs a broad enough framework within which to address the problems highlighted by the examples given
in Section \ref{sec:counter_examples} of the present paper.

This article is structured as follows. In Section~\ref{sec_basic} we define the basic notions of causality theory in several 
variants and establish some fundamental relations between them. We then introduce tools tailored to low regularity, which  enable us to characterise openness of chronological futures and pasts, 
relate non-bubbling with push-up 
and clarify several aspects of pathological behaviour of continuous spacetimes. In Section~\ref{sec:counter_examples} we provide explicit continuous spacetimes that display various pathologies. In particular, we 
give for the first time examples where the chronological future is not open and where the chronological future defined via 
smooth curves and the one defined via Lipschitz curves differ. This answers several open questions raised in~\cite{CG:12}. The final section provides an overview of the interdependence of the
various causality notions studied in this work.

To conclude this introduction we introduce some basic notation. Throughout this paper, $M$ will denote a smooth connected 
second countable Hausdorff manifold. Unless otherwise stated, $g$ will be a continuous Lorentzian metric on $M$, and we will 
assume that $(M,g)$ is time-oriented (i.e., there exists a continuous timelike vector field $\xi$, that is, $g(\xi,\xi)<0$ 
everywhere). We then call $(M,g)$ a~\emph{continuous spacetime}. We shall also fix a smooth complete Riemannian metric $h$ on $M$ 
and denote the induced (length) metric by $d_h$ and the induced norm by $\|.\|_h$. Given Lorentzian metrics $g_1$, $g_2$, we 
say that $g_2$ has \emph{strictly wider light cones\/} than $g_1$, denoted by $g_1\prec g_2$, if for any tangent vector $X 
\neq 0$, $g_1(X,X) \le 0$ implies that $g_2(X,X)<0$ (cf.~\cite[Sec.\ 3.8.2]{MS:08},~\cite[Sec.\ 1.2]{CG:12}). Thus any 
$g_1$-causal vector is timelike for $g_2$.

\section{Curves, bubbles, and open questions}\label{sec_basic}

For Lorentzian metrics of regularity at least $\mathcal{C}^2$, chronological futures and pasts are usually defined via 
piecewise $\mathcal{C}^1$ (or $\mathcal{C}^\infty$)-curves (e.g.,~\cite{HE:73,ONe:83,BEE:96,MS:08}). Especially when working in lower 
regularity, several authors have employed more general classes of curves, the widest one being that of absolutely continuous 
curves, basically due to the fact that this is the largest class of functions for which the Fundamental Theorem of Calculus 
holds. This in turn is required to obtain a reasonable notion of length of curves and to control the curves via their causality.  
To see this let $c\colon[0,1]\rightarrow [0,1]$ be the Cantor function (which has bounded variation) and consider 
the curve $\gamma:$ $[0,1]\ni t\mapsto (c(t),0)$ in two-dimensional Minkowski spacetime $\R^2_1$. Then $\gamma$ parametrizes 
the timelike curve $[0,1]\ni t\mapsto (t,0)$, but its tangent is $(0,0)$ almost everywhere, i.e., spacelike.

Furthermore, absolutely continuous causal curves always possess a locally Lipschitz reparametrisation (cf.\ Lemma~\ref{lem-ac-nic-par} below). On the other hand,
irrespective of the regularity of the metric, the limit curve theorems (\cite{Min:08a}), which are of central importance in 
causality theory, even when applied to families of smooth causal curves typically yield curves that are merely locally 
Lipschitz continuous. 
It is therefore natural to define causal futures and pasts using such curves, and indeed
it was demonstrated in~\cite{Chr:11,CG:12,Min:15,KSSV:14} for spacetimes, as well as in the significantly more general setting of cone structures in~\cite{FS:12,BS:18,Min:19} that a fully satisfactory causality theory can be based on locally Lipschitz causal curves. As we shall see below, 
the situation is more involved in the case of timelike curves. 
 
To discuss the dependence of causality theory on the underlying notions of timelike respectively causal curves, we introduce the following notations:

\begin{definition} We denote by
 \begin{enumerate}
  \item $\mathcal{AC}$ the set of all absolutely continuous curves from an interval into $M$.
  \item $\mathcal{L}$ the set of all locally Lipschitz curves from an interval into $M$.
  \item $\copw$ the set of all piecewise continuously differentiable curves from an interval into $M$.
  \item $\Con^\infty$ the set of all smooth curves from an interval into $M$.
 \end{enumerate}
\end{definition}

Here, a curve is called absolutely continuous if its components in any chart are 
absolutely continuous, or, equivalently, if it is absolutely continuous as
a map into the metric space $(M,d_h)$, where we recall that $d_h$ is the metric on $M$
induced by the Riemannian metric $h$ on $M$ (cf.\ the discussion preceding Thm.\ 6 in~\cite{Min:15}).

Clearly, we have
\begin{equation}\label{eq-cla-cha}
\Con^\infty \subsetneq \copw \subsetneq \mathcal{L} \subsetneq \mathcal{AC}.
\end{equation}
Henceforth, when we say that a curve $\gamma$ in $\mathcal{AC}$ is in fact an element of one of these smaller classes then we mean that there exists a re-parametrization of $\gamma$ that lies in this class.

The definition typically employed in the smooth case (cf.\ the above references),  
but sometimes also used for continuous metrics (e.g.\ in~\cite{Sbi:18}), 
is the following.
\begin{definition}\label{def-cc-cpw}
 Let $\gamma\in\copw$, then $\gamma$ is 
called
\begin{enumerate}
 \item[] \emph{timelike} if $g(\dot\gamma,\dot\gamma)<0$ everywhere.
 \item[] \emph{causal} if $g(\dot\gamma,\dot\gamma)\leq 0$ and $\dot\gamma\neq0$ everywhere.
\end{enumerate}
At breakpoints, the understanding is that the above conditions are satisfied for both the one-sided tangents. A causal curve $\gamma$ is called \emph{future (past) directed} if 
$\dot\gamma$ belongs to the future (past) light cone everywhere (at the breakpoints this means that both one-sided tangents 
belong to the same light cone).
\end{definition}

The usual definition of causal curves of regularity below piecewise $\mathcal{C}^1$ is as follows
(cf.~\cite[Def.\ 1.3]{CG:12}).
\begin{definition}\label{def-cc-ac}
 Let $\gamma\in\mathcal{AC}$, then 
$\gamma$ is 
called
\begin{enumerate}
 \item \emph{timelike} if $g(\dot\gamma,\dot\gamma)<0$ almost everywhere,
 \item \emph{causal} if $g(\dot\gamma,\dot\gamma)\leq 0$ and $\dot\gamma\neq0$ almost everywhere.
\end{enumerate}
A causal curve $\gamma$ is called \emph{future (past) directed} if $\dot\gamma$ belongs to the future (past) light cone 
almost everywhere.
\end{definition}
\begin{remark} In~\cite{BS:18}, there is one further notion of timelike curve that we 
should mention: The authors call a Lipschitz curve timelike if its \emph{Clarke differential\/}
lies in the open chronological cone in the tangent space at each parameter value. 
They then show (\cite[Lem.\ 2.11]{BS:18}) that, using this definition, the chronological
futures and pasts of a point $p\in M$ are precisely the sets $I^\pm_{\Con^\infty}(p)$ (see Definition~\ref{2.5} below).
Therefore, there is no need for a separate treatment of this approach.
\end{remark}

Henceforth we will adhere to the following convention: Whenever a curve is in $\mathcal{AC}$ we will use Definition~\ref{def-cc-ac}, 
however if a curve is \emph{explicitly\/} noted to be in $\copw$, then we use Definition~\ref{def-cc-cpw}.

We now can define the chronological and causal future and past of a point depending on the class of curves chosen.
\begin{definition}
\label{2.5}
Let $\mathcal{A}\in\{\mathcal{AC}, \mathcal{L}, \copw, \Con^\infty\}$ and let $p\in M$. The chronological and causal 
future/past of $p$ with respect to $\mathcal{A}$ are
\begin{align}
I^\pm_{\mathcal{A}}(p)&:=\{q\in M: \exists\text{ future/past directed timelike curve } \gamma\in\mathcal{A} \text{ from 
}p\text{ to }q\}\,,\\
J^\pm_{\mathcal{A}}(p)&:=\{q\in M: \exists\text{ future/past directed causal curve } \gamma\in\mathcal{A} \text{ from 
}p\text{ to }q\}\cup \{p\}\,.
\end{align}
Moreover, for any subset $A\subseteq M$ we set $I^\pm_{\mathcal{A}}(A):=\bigcup_{p\in A} I^\pm_{\mathcal{A}}(p)$ and 
$J^\pm_{\mathcal{A}}(A):=\bigcup_{p\in A}J^\pm_{\mathcal{A}}(p)$.
\end{definition}
From \eqref{eq-cla-cha} we immediately see that for any $p\in M$ we have
\begin{align}
 I^\pm_{\mathcal{C}^\infty}(p)&\subseteq I^\pm_{\copw}(p)\subseteq I^\pm_{\mathcal{L}}(p)\subseteq I^\pm_{\mathcal{AC}}(p)\,,\\
 J^\pm_{\mathcal{C}^\infty}(p)&\subseteq J^\pm_{\copw}(p)\subseteq J^\pm_{\mathcal{L}}(p)\subseteq J^\pm_{\mathcal{AC}}(p)\,.
\end{align}
Next we recall some key notions from~\cite{CG:12} that will be important for our further considerations. 
For any $C>0$, let $\eta_C$ denote the metric $-Cdt^2 + \sum_{i=1}^n (dx^i)^2$ on $\R^{n+1}$ (so $\eta\equiv\eta_1$
is the Minkowski metric).
\begin{definition}\label{cylindric}
A chart $(\varphi=(t,x^1\dots,x^n),U)$ of $M$ centred at $p\in M$ is called cylindrical for $g$ if $U$ is relatively compact, 
$\vphi(U)=L\times V$ for some interval $L\ni 0$ and $0\in V\subseteq \R^n$ open, and
\begin{itemize}
\item[(i)] $(\varphi_*g)(0) = \eta$, the Minkowski metric.
\item[(ii)] There exists some $C>1$ such that $\eta_{C^{-1}}\prec \vphi_* g \prec \eta_C$
on $L\times V$.
\end{itemize}
\end{definition}
Note that point (ii) above implies that $\frac{\partial}{\partial t}$ is timelike and $\frac{\partial}{\partial x^i}$ ($1\le i\le n$) is spacelike on $U$.
By~\cite[Prop.\ 1.10]{CG:12}, every point $p$ lies in the domain of a cylindrical chart. Such a domain is called a 
\emph{cylindrical neighbourhood}.

Following~\cite[Def.\ 1.3]{CG:12}, an absolutely continuous curve $\gamma$ is called~\emph{locally uniformly
timelike\/ (l.u.t.)} if there exists a smooth Lorentzian metric $\check g \prec g$ such that $\check 
g(\dot\gamma,\dot{\gamma})<0$ almost everywhere. For $U\subseteq M$ open and $p\in U$, by $\check I^\pm(p,U)$ we denote the 
set of all points that can be reached by a future (respectively past) directed l.u.t.\ curve in $U$ emanating from $p$. So
\[
\check I^\pm(p,U) = \bigcup \{I^\pm_{\check g}(p,U) : \check g \in \mathcal{C}^\infty, \ \check g \prec g\},
\]
(with $I^\pm_{\check g}(p,U)$ the chronological future/past of $p$ in $U$ with respect to the smooth Lorentzian metric $\check{g}$ 
and using smooth curves). In particular it follows that $\check I^\pm(p,U)$ is open. 

It was shown in the proof of~\cite[Prop.\ 1.10]{CG:12} that in terms of a cylindrical chart $(\vphi,U)$ (where we usually 
will suppress $\vphi$ notationally), $\partial J_{\mathcal{L}}^+(p,U)$ is given as the graph of a Lipschitz function $f_- \colon V \to L$, 
that $I_{\mathcal{L}}^+(p,U)$ is contained in the epigraph $\mathrm{epi}(f_-):=\{(t,x):t\ge f_-(x) \} = J_{\mathcal{L}}^+(p,U)$ of $f_-$,
and that the interior of $J_{\mathcal{L}}^+(p,U)$, denoted by $J_{\mathcal{L}}^+(p,U)^\circ$, equals the strict epigraph $\mathrm{epi}_S(f_-):=\{(t,x):t > f_-(x) \} = J^+(p,U)^\circ$ of $f_-$. Here, $f_-$ is defined as the pointwise limit of the (Lipschitz) graphing
functions of $J^+_{\hat g_k}(p,U)$, where the $\hat g_k\succ g$ are smooth Lorentzian metrics
converging locally uniformly to $g$ as $k\to \infty$. Moreover, the graphing function 
of $\partial \check I^+(p,U)$ is denoted by $f_+$. 
\begin{figure}[h!]
	\begin{center}
		\includegraphics[width=92mm, height= 60mm]{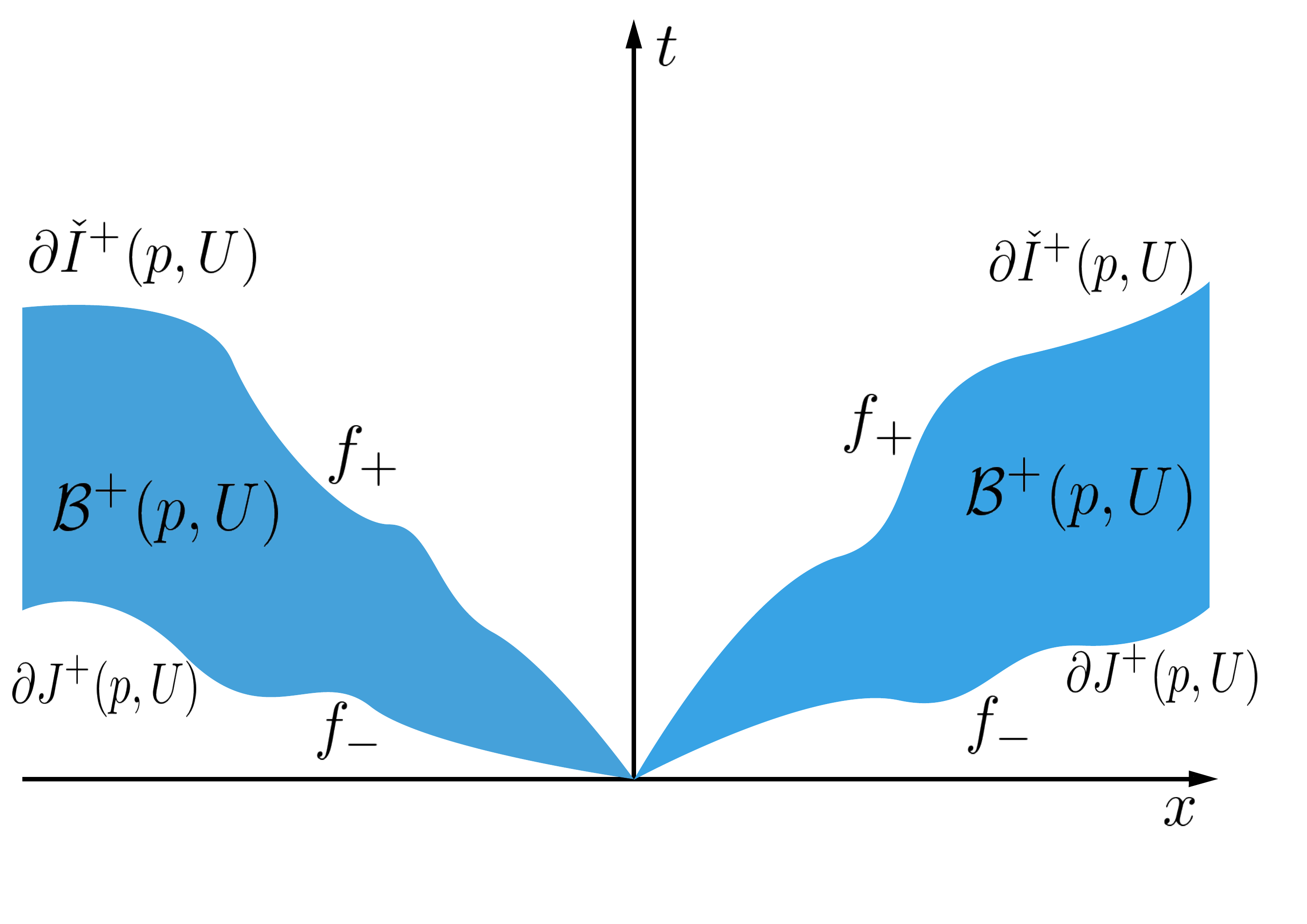}
	\end{center}
	\caption{A cylindrical neighbourhood}
\label{fig:0}
\end{figure}
\begin{lemma}\label{lem:icheck_is_ic1} For any continuous spacetime $(M,g)$ and any $p\in M$,
\begin{equation}\label{icheckic1}
\check I^\pm(p) = I^\pm_{\cinfty}(p) = I^\pm_{\copw}(p).
\end{equation}
In particular, $I^\pm_{\copw}(p)$ is open.
\end{lemma}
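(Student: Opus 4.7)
The plan is to establish the circular chain $\check I^\pm(p) \subseteq I^\pm_{\cinfty}(p) \subseteq I^\pm_{\copw}(p) \subseteq \check I^\pm(p)$ and then to deduce openness of $I^\pm_{\copw}(p)$ from the fact that $\check I^\pm(p) = \bigcup\{I^\pm_{\check g}(p) : \check g \in \cinfty,\ \check g \prec g\}$ is a union of sets that are open by classical smooth causality theory. The first two inclusions are essentially definitional: any smooth $\check g$-timelike curve with $\check g \prec g$ is automatically $g$-timelike (because $\check g$-causal vectors are $g$-timelike), yielding $\check I^\pm(p) \subseteq I^\pm_{\cinfty}(p)$, while $I^\pm_{\cinfty}(p) \subseteq I^\pm_{\copw}(p)$ is immediate from $\cinfty \subseteq \copw$.

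The real content is in the remaining inclusion $I^\pm_{\copw}(p) \subseteq \check I^\pm(p)$. I would take $q \in I^+_{\copw}(p)$ witnessed by a piecewise $\mathcal{C}^1$ future-directed $g$-timelike curve $\gamma\colon[0,1]\to M$ from $p$ to $q$. Since $\gamma$ has only finitely many breakpoints and its image $K := \gamma([0,1])$ is compact, Definition~\ref{def-cc-cpw} together with continuity of $g(\dot\gamma,\dot\gamma)$ on each smooth piece (including at the one-sided tangents at the breakpoints) furnishes a uniform bound $g(\dot\gamma,\dot\gamma) \le -m$ for some $m>0$. I would then invoke the standard approximation result for $\mathcal{C}^0$-Lorentzian metrics (the construction dual to the $\hat g_k \succ g$ approximation already used in the discussion preceding the lemma; see~\cite{CG:12}) to obtain smooth Lorentzian metrics $\check g_k \prec g$ with $\check g_k \to g$ locally uniformly. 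For $k$ sufficiently large, uniform convergence on $K$ forces $\check g_k(\dot\gamma,\dot\gamma) < 0$ on every smooth piece of $\gamma$, so $\gamma$ is a piecewise $\mathcal{C}^1$ $\check g_k$-timelike curve with respect to the \emph{smooth} metric $\check g_k$. Classical smooth causality theory (see e.g.~\cite{ONe:83, MS:08}) then produces a smooth $\check g_k$-timelike curve from $p$ to $q$, giving $q \in I^+_{\check g_k}(p) \subseteq \check I^+(p)$. The past case is symmetric.

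The main obstacle is the approximation step, i.e.\ the existence of smooth $\check g_k \prec g$ converging locally uniformly to $g$; this is well known in the $\mathcal{C}^0$-Lorentzian setting (e.g.\ via mollification combined with an appropriate cone-narrowing using the time-orientation) and is entirely analogous to the widening construction already employed in~\cite{CG:12}. Once this is available, the chain of inclusions closes, yielding the two equalities in~\eqref{icheckic1}, and openness of $I^\pm_{\copw}(p)$ follows at once since it coincides with the manifestly open set $\check I^\pm(p)$.
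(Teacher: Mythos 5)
Your proposal is correct and follows essentially the same route as the paper: after the definitional inclusions, the paper also extracts a uniform timelike bound $c=\max g(\dot\gamma,\dot\gamma)<0$ along the compact piecewise $\mathcal{C}^1$ curve together with a bound $\|\dot\gamma\|_h\le C'$, and then invokes the approximation result \cite[Prop.~1.2]{CG:12} to produce a single smooth $\check g\prec g$ with $d_h(g,\check g)<|c|/(2(C')^2)$, forcing $\check g(\dot\gamma,\dot\gamma)<c/2<0$. Your sequential formulation $\check g_k\to g$ is just a repackaging of that same quantitative step (do note that you implicitly need the bound on $\|\dot\gamma\|_h$, not merely uniform convergence of the metrics on $\gamma([0,1])$), so the argument is sound.
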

\begin{proof}
Clearly, $\check I^\pm(p) \sse I^\pm_{\cinfty}(p) \sse I^\pm_{\copw}(p)$. Conversely, let 
$\gamma \colon [a,b] \to M$ be a future-directed timelike piecewise $\mathcal{C}^1$ curve from $p$ to 
some $q\in I^+_{\copw}(p)$ and set $c:= \max_{t\in [a,b]}g(\dot \gamma(t),\dot \gamma(t))<0$
(with the maximum taken over both values at the finitely many breakpoints). Moreover, we have that $\| \dot\gamma \|_h \leq C'$ on 
$[a,b]$ for some $C'>0$. By~\cite[Prop.\ 1.2]{CG:12} there exists a smooth Lorentzian metric $\check g \prec g$ such that
$d_h(g,\check{g})<\frac{|c|}{2 (C')^2}$, where
\begin{equation}
d_h(g_1,g_2) := \sup_{p\in M,0\not=X,Y\in T_pM} \frac{|g_1(X,Y)-g_2(X,Y)|}{\|X\|_h 	\|Y\|_h}.
\end{equation}
Consequently, $\max_{t\in [a,b]}\check g(\dot \gamma(t),\dot \gamma(t))<c/2<0$ for all $t\in [a,b]$,
so $q\in I_{\check{g}}^+(p)\sse \check I^+(p)$.
\end{proof}

The following Lemma is an analogue of~\cite[Lem.\ 3.2.1]{Kri:99} (providing a distinguished local parametrization for
continuous causal curves in smooth spacetimes) for continuous metrics and cylindrical neighbourhoods:
\begin{lemma}\label{lem-ac-nic-par} 
 Let $(M,g)$ be a continuous spacetime, $p\in M$ and let $(\vphi,U)$ be a cylindrical chart around $p$. Let 
$\gamma\colon[a,b]\rightarrow U$ be an absolutely continuous future directed causal curve. Then there is a reparametrisation 
$\tilde\gamma=\gamma\circ\phi$ of $\gamma$, with $\phi\colon[c,d]\rightarrow[a,b]$ strictly increasing and absolutely 
continuous, such that in the chart $\vphi$, for all $t\in[c,d]$ one has
\begin{equation}
 \tilde\gamma(t) = (t,\vec{\gamma}(t))\,.
\end{equation}
\end{lemma}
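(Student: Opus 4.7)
The idea is to reparametrise $\gamma$ by the time coordinate of the cylindrical chart. In the chart $\vphi$ write $\gamma(s)=(t(s),\vec x(s))$, so that both $t\colon[a,b]\to L$ and $\vec x\colon[a,b]\to V$ are absolutely continuous. I aim to show that $t$ is a strictly increasing AC bijection onto $[c,d]:=[t(a),t(b)]$ whose inverse $\phi$ is again AC; setting $\tilde\gamma:=\gamma\circ\phi$ then yields $\tilde\gamma(r)=(r,\vec\gamma(r))$ with $\vec\gamma:=\vec x\circ\phi$, as required.

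The key analytic fact is that $\dot t>0$ almost everywhere. At a.e.\ $s\in[a,b]$ the derivative $\dot\gamma(s)$ exists, is nonzero, and is $\vphi_*g$-causal and future directed. The cone inclusion $\vphi_*g\prec\eta_C$ from Definition~\ref{cylindric}(ii) states that every nonzero $\vphi_*g$-causal vector is $\eta_C$-timelike; in coordinates this reads $|\dot{\vec x}(s)|^2<C\,\dot t(s)^2$, so $\dot t(s)\ne 0$. Because $\partial_t$ is timelike and future directed throughout $U$ (as observed after Definition~\ref{cylindric}) and the future cone varies continuously, future-directedness of $\dot\gamma$ fixes the sign and forces $\dot t(s)>0$ a.e. Since $t$ is AC, this immediately yields that $t$ is strictly monotone: any $s_1<s_2$ with $t(s_1)=t(s_2)$ would make $\int_{s_1}^{s_2}\dot t\,du$ vanish, contradicting $\dot t>0$ a.e.

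The remaining — and only substantive — step is to verify that $\phi:=t^{-1}\colon[c,d]\to[a,b]$ is absolutely continuous. For this I appeal to the classical fact that a strictly increasing absolutely continuous function with $\dot t>0$ almost everywhere has an AC inverse. Here it is essential that $\dot t$ be strictly positive (not merely nonzero) a.e.: if $\{\dot t=0\}$ had positive measure, then by the Lusin (N) property of $t$ its $t$-image would be a null set whose $\phi$-preimage has positive measure, precluding absolute continuity of $\phi$. Thus the cone geometry of the cylindrical chart, via the bound $|\dot{\vec x}|^2<C\,\dot t^2$, provides exactly the hypothesis needed for the inverse-AC theorem to apply, after which $\tilde\gamma(r)=\gamma(\phi(r))=(t(\phi(r)),\vec x(\phi(r)))=(r,\vec\gamma(r))$ is the desired reparametrisation.
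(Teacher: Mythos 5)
Your proposal is correct and follows essentially the same route as the paper: reparametrise by the chart time coordinate, use the cone bound $\vphi_*g\prec\eta_C$ together with future-directedness to get $\dot\gamma^0>0$ almost everywhere, and invoke Zarecki's criterion (the ``inverse-AC theorem'', \cite[p.~271]{Nat:55}) to conclude that $\phi=(\gamma^0)^{-1}$ is absolutely continuous. The only item the paper adds beyond what the statement literally demands is a verification (via \cite[Thm.~3, Ch.~IX, \S 1]{Nat:55} and Lusin's property) that $\tilde\gamma=\gamma\circ\phi$ is itself absolutely continuous and still future directed causal, which is what is actually used downstream in Lemma~\ref{lem:lip_is_ac}.
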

\begin{proof}
Suppressing $\vphi$ notationally, let $C>1$ and $g \prec \eta_C$ on $U$. 
Then $\gamma$ is also future directed with respect to $\eta_C$ and hence
 \begin{equation}
  0>\eta_C(\dot\gamma,\partial_t) = -C\dot\gamma^0\,,
 \end{equation}
almost everywhere. Thus $\dot\gamma^0>0$ almost everywhere, and so $\gamma^0$ is strictly monotonically increasing. Setting 
$\phi:=(\gamma^0)^{-1}\colon [\gamma^0(a),\gamma^0(b)]\rightarrow [a,b]$ and observing that $\dot\phi>0$ almost everywhere, 
we obtain that $\phi$ is absolutely continuous by a result of Zarecki~\cite[p.\ 271]{Nat:55}. Consequently, 
$\tilde\gamma:=\gamma\circ\phi$ is absolutely continuous by~\cite[Thm.\ 3, Ch.\ IX, \S 1]{Nat:55} and so $\tilde\gamma$ is a 
future directed causal curve with $\tilde\gamma(t)=(\gamma^0((\gamma^0)^{-1}(t)),\gamma^i(\phi(t))) = (t, \vec\gamma(t))$ 
(note that absolutely continuous functions map sets of measure zero to sets of measure zero --- Lusin's property, see e.g.\ 
\cite[Thm.\ 3.4.3]{AT:04}).
\end{proof}

The previous result allows us to conclude that absolutely continuous causal curves always possess a reparametrisation that
is Lipschitz-continuous. This fact was already noticed in~\cite[Rem.\ 2.3]{Min:19}. We include an alternative proof
based on cylindrical neighbourhoods for convenience.
\begin{lemma}\label{lem:lip_is_ac}
 Let $(M,g)$ be a continuous spacetime. Then any causal curve in $\mathcal{AC}$ is locally Lipschitz continuous, hence lies in 
$\mathcal{L}$. Thus, for any $p\in M$, $I^\pm_{\mathcal{AC}}(p) = I^\pm_{\mathcal{L}}(p)$ and $J^\pm_{\mathcal{AC}}(p) = 
J^\pm_{\mathcal{L}}(p)$.
\end{lemma}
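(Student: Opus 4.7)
The plan is to apply Lemma~\ref{lem-ac-nic-par} inside cylindrical neighbourhoods to obtain a Lipschitz bound on a local reparametrization of $\gamma$, and then paste finitely many such local reparametrizations together into a single global one. Since locally Lipschitz means Lipschitz on compact subintervals, it is enough to treat a fixed compact subinterval $[a,b]$ of the domain.

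Let $\gamma\colon [a,b] \to M$ be absolutely continuous, future directed and causal. The image $\gamma([a,b])$ is compact, so it can be covered by finitely many cylindrical charts $(\vphi_j, U_j)$, $j=1,\ldots,N$, with $g \prec \eta_{C_j}$ on $U_j$ for some $C_j>1$; correspondingly partition $[a,b]$ into adjacent subintervals $[a_{j-1},a_j]$ with $\gamma([a_{j-1},a_j])\sse U_j$. On each piece, Lemma~\ref{lem-ac-nic-par} produces a strictly increasing absolutely continuous reparametrization $\phi_j$ (whose inverse is also AC) such that $\tilde\gamma_j := \gamma\circ\phi_j$ has the form $\tilde\gamma_j(t) = (t,\vec\gamma_j(t))$ in the chart $\vphi_j$. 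Because $\tilde\gamma_j$ is $g$-causal and $g \prec \eta_{C_j}$, the vector $\dot{\tilde\gamma}_j$ is $\eta_{C_j}$-timelike almost everywhere, so
\[
0 > \eta_{C_j}(\dot{\tilde\gamma}_j,\dot{\tilde\gamma}_j) = -C_j + |\dot{\vec\gamma}_j|^2
\]
a.e., where $|\cdot|$ denotes the Euclidean norm in the chart. Hence $|\dot{\vec\gamma}_j| < \sqrt{C_j}$ a.e., so $\tilde\gamma_j$ is Lipschitz in the Euclidean norm of $\vphi_j$. Since $U_j$ is relatively compact and $h$ is smooth, this Lipschitz property transfers to $d_h$.

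Concatenating the $\phi_j$'s gives a strictly increasing AC reparametrization $\phi$ of a new interval onto $[a,b]$, with respect to which $\gamma\circ\phi$ is Lipschitz on each of the adjacent pieces. A continuous curve that is Lipschitz on finitely many adjacent compact intervals is Lipschitz on their union (the piecewise constants combine via a telescoping triangle inequality, yielding the maximum as a global constant). Thus $\gamma\circ\phi$ is Lipschitz on the entire new interval, which shows $\gamma \in \mathcal{L}$. The equalities $I^\pm_{\mathcal{AC}}(p) = I^\pm_{\mathcal{L}}(p)$ and $J^\pm_{\mathcal{AC}}(p) = J^\pm_{\mathcal{L}}(p)$ then follow at once from the inclusions in~\eqref{eq-cla-cha}.

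The main technical subtlety is the gluing of the local reparametrizations into a single strictly monotone AC map; once the cylindrical chart estimate $|\dot{\vec\gamma}_j| < \sqrt{C_j}$ is in hand, the rest is essentially bookkeeping using that cylindrical charts are relatively compact and that finitely many adjacent Lipschitz pieces concatenate to a Lipschitz whole.
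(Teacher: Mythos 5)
Your proposal is correct and follows essentially the same route as the paper: apply Lemma~\ref{lem-ac-nic-par} in a cylindrical chart to normalize $\gamma$ to the form $(t,\vec\gamma(t))$, then use $g\prec\eta_C$ to get the a.e.\ bound $\|\dot{\vec\gamma}\|_e<\sqrt{C}$ and hence a local Lipschitz estimate. The only difference is that you explicitly carry out the concatenation of the finitely many local reparametrizations into one global strictly increasing AC reparametrization, a (correct and routine) bookkeeping step that the paper leaves implicit by arguing purely locally around each parameter value.
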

\begin{proof}
Let $\gamma\colon[a,b]\rightarrow M$ be an absolutely continuous future directed causal curve and let $t_0\in[a,b]$. Set 
$p:=\gamma(t_0)$ and let $U$ be a cylindrical neighbourhood of $p$. Moreover, let $\delta>0$ such that 
$\gamma([t_0-\delta,t_0+\delta])\subseteq U$ and let $C>1$ with $g\prec \eta_C$ on $U$. Without loss of generality we can 
assume that $\gamma(t)=(t,\vec\gamma(t))$ on $U$ by Lemma~\ref{lem-ac-nic-par}. Then $\gamma$ is timelike for $\eta_C$ and 
hence
\begin{equation}
 0>\eta_C(\dot\gamma,\dot\gamma) = -C + \|\dot{\vec\gamma}(t)\|_e^2\,,
\end{equation}
where $\|.\|_e$ denotes the Euclidean norm in the chart. This implies that $\|\dot\gamma\|_e\leq \sqrt{1+C}<\infty$. 
Consequently, $\dot\gamma$ is bounded and hence $\gamma$ is 
Lipschitz continuous.
\end{proof}
Based on this result, for $(M,g)$ a continuous spacetime we shall henceforth
define the chronological respectively causal future/past of a point by 
$I^\pm(p):=I^\pm_{\mathcal{AC}}(p)=I^\pm_{\mathcal{L}}(p)$ and $J^\pm(p)
:=J^\pm_{\mathcal{AC}}(p)=J^\pm_{\mathcal{L}}(p)$. This is in accordance with the conventions used in, e.g.,~\cite{Chr:11,CG:12,FS:12,KSSV:14,Sae:16,GGKS:18,GL:18, GLS:18}, and, for the case of $J^\pm(p)$ also with~\cite{BS:18,Min:15,Min:19}. 

For chronological (i.e., possessing no closed timelike curves) spacetimes we have the following characterization of openness of chronological
futures and pasts:
\begin{theorem}\label{th:iplusachronal}
	Let $(M,g)$ be a continuous and chronological spacetime. Then the following are equivalent:
	\begin{itemize}
		\item[(i)] For all $p\in M$, $I^\pm(p)$ is open.
		\item[(ii)] For all $p\in M$, $\partial I^\pm(p)$ is achronal. 
		\item[(iii)] For all $p\in M$, $\partial I^\pm(p)$ is an achronal Lipschitz-hypersurface.
		\item[(iv)] If $F$ is a future/past set with $\partial F\not=\emptyset$, then $\partial F$ 
		is an achronal Lipschitz-hypersurface.
	\end{itemize}
\end{theorem}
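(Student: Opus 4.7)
The plan is to prove the cycle (iv) $\Rightarrow$ (iii) $\Rightarrow$ (ii) $\Rightarrow$ (i) $\Rightarrow$ (iv). The first two implications are essentially tautological: since concatenating two Lipschitz timelike curves yields a Lipschitz timelike curve, each $I^\pm(p)$ is itself a future/past set, so applying (iv) to $F=I^\pm(p)$ immediately gives (iii); and ``achronal Lipschitz hypersurface'' is in particular achronal, so (iii) $\Rightarrow$ (ii). The real substance lies in the two remaining steps.

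For (ii) $\Rightarrow$ (i) I would argue by contraposition. Suppose $I^+(p)$ is not open; then there exists $q\in I^+(p)\cap\partial I^+(p)$ and a future-directed timelike Lipschitz curve $\gamma\colon[0,T]\to M$ with $\gamma(0)=p$, $\gamma(T)=q$. The decisive point is that the chronology hypothesis forces $p$ itself to lie on $\partial I^+(p)$: on the one hand, $p\notin I^+(p)$ by chronology, so $p\notin I^+(p)^\circ$; on the other hand, each initial segment $\gamma|_{[0,s]}$ is a bona fide future-directed timelike curve from $p$ to $\gamma(s)$, and Lemma~\ref{lem-ac-nic-par} applied in a cylindrical chart around $p$ shows $\gamma(s)\neq p$ for small $s>0$, so $\gamma(s)\in I^+(p)$ and $\gamma(s)\to p$, giving $p\in\overline{I^+(p)}$. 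Hence $p\in\partial I^+(p)$, and now the pair $p,q\in\partial I^+(p)$ is chronologically related through $\gamma$, directly contradicting the assumed achronality of $\partial I^+(p)$. The past case is symmetric.

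For (i) $\Rightarrow$ (iv), let $F$ be a future set with $\partial F\neq\emptyset$. For achronality: if $x,y\in\partial F$ satisfied $y\in I^+(x)$, pick $x_n\in F$ with $x_n\to x$ (available since $x\in\overline F$); openness of $I^-(y)$ from (i) places $x_n\in I^-(y)$ for large $n$, whence $y\in I^+(x_n)\subseteq I^+(F)\subseteq F$ by the future-set property, and openness of $I^+(x_n)$ then puts an entire neighbourhood of $y$ inside $F$, contradicting $y\in\partial F$. For the Lipschitz-hypersurface statement, work in a cylindrical chart $(\vphi,U)$ with constant $C>1$ around any point of $\partial F$: achronality combined with the fact that $\partial_t$ is $g$-timelike rules out two distinct points of $\partial F\cap U$ sharing the same spatial coordinates, so $\partial F\cap U$ is the graph of a single-valued function $f$ over a subset of $V$; and if $f$ had local slope exceeding $\sqrt{C}$, the corresponding straight chord would be $\eta_{C^{-1}}$-timelike and hence $g$-timelike, again violating achronality, so $f$ is $\sqrt{C}$-Lipschitz. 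The main obstacle, and the decisive use of chronology, is the trick in (ii) $\Rightarrow$ (i): without chronology one could have $p\in I^+(p)$ and the argument that $p\in\partial I^+(p)$ would collapse, which explains why chronology figures explicitly in the hypothesis.
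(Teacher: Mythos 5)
Your proposal is correct and follows essentially the same route as the paper: the same cycle of implications, the same use of chronology to place $p$ itself on $\partial I^+(p)$ in the step (ii)$\Rightarrow$(i), and the standard cylindrical-chart/graphing argument for (i)$\Rightarrow$(iv) that the paper delegates to the literature. The only point to add is that in (iv)$\Rightarrow$(iii) one must first check that $\partial I^\pm(p)\neq\emptyset$ before (iv) can be applied --- this follows from chronology and connectedness of $M$ by exactly the observation you already make in (ii)$\Rightarrow$(i), and the paper notes it explicitly.
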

\begin{proof} (iii)$\Rightarrow$(ii) is clear.

\noindent (ii)$\Rightarrow$(i): Suppose that $I^+(p)$ were not open for some $p\in M$. Then there exists some $q\in I^+(p)\cap \partial I^+(p)$. As $M$ is chronological, $p\not\in I^+(p)$.
 Since clearly $p\in \overline{I^+(p)}$, it follows that $p\in \partial I^+(p)$. But then the elements $p$ and $q$ of $\partial I^+(p)$ can be connected by a timelike curve, contradicting achronality.

\noindent (i)$\Rightarrow$(iv): Using cylindrical neighbourhoods, this follows exactly as in the smooth case (cf., e.g.,~\cite[Cor.\ 14.27]{ONe:83}), cf.\ also~\cite[Proof of Thm.\ 2.6]{GL:17}. 
For a detailed proof see~\cite[Thm.\ 2.3.5]{Lec:16}.

\noindent (iv)$\Rightarrow$(iii): One only has to note that each $\partial I^+(p)$ is non-empty, 
which follows from $M$ being chronological, cf.\ the argument in (ii)$\Rightarrow$(i) above.
\end{proof}

The spacetime $(M,g)$ is called \emph{causally plain} (\cite[Def.\ 1.16]{CG:12}) if every
$p\in M$ possesses a cylindrical neighbourhood $U$ such that $\partial \check I^\pm(p,U)=\partial J^\pm(p,U)$.
Otherwise it is called \emph{bubbling}\footnote{We note that the phenomenon of bubbling, albeit not under this name, was first observed in \cite{Chr:91}.}. Note that whenever we work in a cylindrical chart, all 
topological notions (like closure, boundary, interior) refer to the relative topology in $U$.
If $\partial \check I^\pm(p,U)\not=\partial J^\pm(p,U)$, then 
the open and non-empty set (\cite[Prop.\ 1.10 (vi)]{CG:12})
\[
\mathcal{B}^+(p,U) := \check I^-(\partial \check I^+(p,U),U) \cap \check I^+(\partial J^+(p,U),U) = \{(t, x) \in U: f_-(x) < t < 
f_+(x)\}
\]
is called the \emph{future bubble set} of $p$ (and analogously for the past bubble set), cf. Figure~\ref{fig:0}. Refining
this terminology, we additionally introduce the \emph{interior future bubble set}
\[
\mathcal{B}^+_{\mathrm{int}}(p,U) := I^+(p,U) \setminus \check I^+(p,U),
\]
as well as the \emph{exterior future bubble set}
\[
\mathcal{B}^+_{\mathrm{ext}}(p,U) := J^+(p,U) \setminus \overline{I^+(p,U)},
\]
and analogously for the past.

Using Lemma~\ref{lem:icheck_is_ic1} and Lemma~\ref{lem:lip_is_ac}, 
it follows that $\mathcal{B}^+_{\mathrm{int}}(p,U)$
consists of those points that can be reached from $p$ by a future directed Lipschitz timelike curve, 
but not by a future directed piecewise $\mathcal{C}^1$ timelike curve. It remained an open problem in~\cite{CG:12} whether spacetimes
with non-trivial interior bubble sets actually exist. We will give examples of this phenomenon in the next section.

The exterior bubbling set is closely connected to push-up properties. To relate these to the concepts introduced above, let
us first give a formal definition:
\begin{definition}\label{def:push-up}
A continuous spacetime $(M,g)$ is said to possess the push-up property if the following holds: Whenever $\gamma \colon [a,b]\to M$
is a (absolutely continuous) future/past directed causal curve from $p=\gamma(a)$ to $q=\gamma(b)$ and if $\{t\in [a,b]: 
\dot\gamma(t) \text{ exists and is future/past directed timelike} \}$ has non-zero Lebesgue measure, then in any neighbourhood of $\gamma([a,b])$ 
there exists a future/past directed timelike Lipschitz curve connecting $p$ and $q$. In particular, $q\in I^+(p)$ 
respectively $q\in I^-(p)$.
\end{definition}

This property implies all the commonly used versions of push-up results. In particular, for an absolutely continuous causal curve 
$\gamma \colon [a,b]\rightarrow M$ the set $\{t \in [a,b]: \dot\gamma(t)$ exists and is future/past directed timelike$\}$ has non-zero 
Lebesgue measure if and only if $\gamma$ has positive length. A spacetime that possesses the push-up property in the sense of Definition~\ref{def:push-up} also satisfies what 
in~\cite[Lemma~2.4.14]{Chr:11} and~\cite[Lemma~1.22]{CG:12} are called
push-up Lemma I (i.e., $I^+(J^+(\Omega)) = I^+(\Omega)$ for any $\Omega\sse M$) and push-up Lemma II, cf.~\cite[Lemma~2.9.10]{Chr:11} and~\cite[Lemma 1.24]{CG:12}. The following result shows that push-up is in fact equivalent to the non-existence
of exterior bubbling:
\begin{theorem}\label{thm:push-up_ext_bubble}
Let $(M,g)$ be a continuous spacetime. The following are equivalent:
\begin{itemize}
\item[(i)] For each $p\in M$ and each cylindrical chart $U$ centred at $p$, $\mathcal{B}^\pm_{\mathrm{ext}}(p,U)=\emptyset$.
\item[(ii)] For each $p\in M$ and each cylindrical chart $U$ centred at $p$, 
$\partial I^\pm(p,U) = \partial J^\pm(p,U)$.
\item[(iii)] $(M,g)$ possesses the push-up property. 
\end{itemize}	
\end{theorem}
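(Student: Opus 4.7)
The plan is to prove the cycle \textbf{(iii)}$\Rightarrow$\textbf{(i)}, \textbf{(i)}$\Leftrightarrow$\textbf{(ii)}, \textbf{(i)}$\Rightarrow$\textbf{(iii)}. For \textbf{(iii)}$\Rightarrow$\textbf{(i)}: given $q\in J^+(p,U)$, I extend the causal curve from $p$ to $q$ by a short segment along $\partial_t$, which is $g$-timelike in the cylindrical chart since $\eta_{C^{-1}}\prec g$. The extended curve has positive-measure timelike tangent, so the push-up hypothesis yields a timelike Lipschitz curve from $p$ to a point $q_\epsilon\to q$, giving $q\in\overline{I^+(p,U)}$; hence $\mathcal{B}^+_{\mathrm{ext}}(p,U)=\emptyset$, and the past case is symmetric.

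For \textbf{(i)}$\Leftrightarrow$\textbf{(ii)}, I work in the cylindrical chart where $J^+(p,U)=\mathrm{epi}(f_-)$ and $\check I^+(p,U)=\{t>f_+\}$. For \textbf{(i)}$\Rightarrow$\textbf{(ii)}: assuming $\overline{I^+(p,U)}=J^+(p,U)$, given $(t_0,x_0)$ with $t_0>f_-(x_0)$, density of $I^+(p,U)$ in $J^+(p,U)$ provides $(t_n,x_n)\in I^+(p,U)$ within Euclidean distance $n^{-2}$ of $(t_0-n^{-1},x_0)$. The chord from $(t_n,x_n)$ to $(t_0,x_0)$ has slope $O(n^{-1})$, so it is eventually $\eta_{C^{-1}}$-causal and hence smoothly $g$-timelike; concatenation gives $(t_0,x_0)\in I^+(p,U)$, whence $J^+(p,U)^\circ=I^+(p,U)^\circ$ and $\partial I^+(p,U)=\partial J^+(p,U)$. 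For \textbf{(ii)}$\Rightarrow$\textbf{(i)}: the elementary fact $\partial\overline{A}\subseteq\partial A$ combined with (ii) yields $\partial\overline{I^+(p,U)}\subseteq\{t=f_-\}$; since $\overline{I^+(p,U)}\supseteq\{t=f_-\}\cup\{t\ge f_+\}$, any hypothetical $(t_0,x_0)\in J^+(p,U)\setminus\overline{I^+(p,U)}$ would force the vertical segment from $(t_0,x_0)$ upward to cross $\partial\overline{I^+(p,U)}$ at a point $(t^*,x_0)$ with $t^*>f_-(x_0)$, contradicting the boundary bound.

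The main content is \textbf{(i)}$\Rightarrow$\textbf{(iii)}. Let $\gamma\colon[a,b]\to M$ be an AC causal curve from $p$ to $q$ whose timelike-tangent set $S$ has positive Lebesgue measure, and let $\Omega$ be an open neighborhood of $\gamma([a,b])$. At a Lebesgue density point $t^*\in S$ with $\dot\gamma(t^*)$ $g$-timelike, first-order Taylor expansion in a cylindrical chart $V^*\subseteq\Omega$ around $\gamma(t^*)$ shows that for sufficiently small $h>0$ the chord from $p_-:=\gamma(t^*-h)$ to $p_+:=\gamma(t^*+h)$ is smooth and $g$-timelike, giving $p_+\in\check I^+(p_-,V^*)$. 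The essential local lemma, derived from (i), reads: \emph{in any cylindrical chart $V$ centered at $p'$, if $q'\in J^+(p',V)$ and $r\in\check I^+(q',V)$, then $r\in I^+(p',V)$ via a timelike Lipschitz curve in $V$}. Indeed $\overline{I^+(p',V)}=J^+(p',V)$ makes $I^+(p',V)$ dense in $J^+(p',V)$, so the open set $\check I^-(r,V)$---which contains $q'$---meets $I^+(p',V)$ at some $z$, and the concatenation of a timelike Lipschitz curve $p'\to z$ with the smooth timelike curve $z\to r$ gives the claim.

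To finish, I iterate backward along a partition $a=s_0<s_1<\dots<s_n=t^*-h$ with cylindrical charts $V_i\subseteq\Omega$ centered at $q_i=\gamma(s_i)$ and $\gamma([s_i,s_{i+1}])\subseteq V_i$, constructing inductively a timelike Lipschitz curve $\tau_i\colon q_i\to p_+$ in $\Omega$; the base case $i=n$ is the chord in $V^*$. Passing from $V_i$ to $V_{i-1}$, I let $q_i^{\uparrow}$ be a small $\partial_t$-shift of $q_i$ in the chart $V_{i-1}$, so $q_i^{\uparrow}\in\check I^+(q_i,V_{i-1})$ via a straight timelike chord; the local lemma then produces a timelike Lipschitz curve $q_{i-1}\to q_i^{\uparrow}$ in $V_{i-1}$, and a further smooth $g$-timelike chord from $q_i^{\uparrow}$ to a differentiability point $y=\tau_i(\eta_0)$ of $\tau_i$ with timelike derivative permits concatenation with $\tau_i|_{[\eta_0,1]}$ to reach $p_+$, producing $\tau_{i-1}$. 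A symmetric iteration using $\mathcal{B}^-_{\mathrm{ext}}=\emptyset$ and the past version of the local lemma then propagates the timelike curve forward from $p_+$ along $\gamma|_{[t^*+h,b]}$ until it reaches $q$. The main obstacle is this iterative bookkeeping: since Lipschitz timelike curves are not automatically l.u.t., at each step one must select a differentiability point of the accumulated curve with $g$-timelike derivative to form the required smooth bridging chord, while controlling all the modifications to keep the final curve inside $\Omega$.
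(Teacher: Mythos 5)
Your equivalences (iii)$\Rightarrow$(i) and (i)$\Leftrightarrow$(ii) are correct, and your ``local lemma'' --- that absence of external bubbling forces $\check I^+(q',V)\subseteq I^+(p',V)$ whenever $q'\in J^+(p',V)$, via density of $I^+(p',V)$ in $J^+(p',V)$ and openness of $\check I^-(r,V)$ --- is sound; it is the same density mechanism the paper exploits. Your architecture for (i)$\Rightarrow$(iii) is, however, genuinely different from the paper's: you build the timelike curve by a forward/backward induction along a chain of cylindrical charts, whereas the paper argues by contradiction inside a single chart, showing that if $q\in\partial J^+(p,U)\setminus I^+(p,U)$ then a subinterval of $\gamma$ containing a timelike-differentiability parameter must lie on the graph of the graphing function $f_q^-$ of $\partial J^-(q,U)$, which the timelike tangent there forbids. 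Your explicit iteration has the merit of handling the ``in any neighbourhood of $\gamma([a,b])$'' clause of Definition~\ref{def:push-up} more transparently than the paper's brief covering remark, but it is also where the one real problem sits.

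The inductive bridging step fails as written. You route $q_{i-1}\to q_i^{\uparrow}\to y=\tau_i(\eta_0)\to p_+$ and need the chord from $q_i^{\uparrow}$ to $y$ to be future-directed $g$-timelike. There is no reason for this: $y$ lies on a merely Lipschitz timelike curve issuing from $q_i$, and long chords of such curves need not be timelike --- this is precisely the interior-bubbling phenomenon (in Example~\ref{number1} the curve $c$ is timelike in the sense of Definition~\ref{def-cc-ac}, yet its endpoint lies on $\partial J^+(p)$, so no straight timelike, let alone l.u.t., chord from near its initial point reaches points far along it; near the $t$-axis the required chord even fails to be future-directed causal). Differentiability of $\tau_i$ at $\eta_0$ only controls the \emph{short} chords $\tau_i(\eta_0)\to\tau_i(\eta_0+\delta)$, not the chord emanating from $q_i^{\uparrow}$; and since $q_i^{\uparrow}$ lies in $\check I^+(q_i)$, demanding $y\in\check I^+(q_i^{\uparrow})$ is even more restrictive than $y\in\check I^+(q_i)$, which is exactly what interior bubbling destroys. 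The repair is to drop $q_i^{\uparrow}$ altogether and feed the curve $\tau_i$ itself into your local lemma: choose a differentiability point $\eta_0$ of $\tau_i$ with timelike derivative, small enough that $\tau_i|_{[0,\eta_0+\delta]}\subseteq V_{i-1}$. Then $\tau_i(\eta_0)\in J^+(q_{i-1},V_{i-1})$ (concatenate the causal curves $\gamma|_{[s_{i-1},s_i]}$ and $\tau_i|_{[0,\eta_0]}$), and $\tau_i(\eta_0+\delta)\in\check I^+(\tau_i(\eta_0),V_{i-1})$ by the short-chord Taylor argument you already use at $t^*$; the local lemma with $q'=\tau_i(\eta_0)$ and $r=\tau_i(\eta_0+\delta)$ then yields a timelike Lipschitz curve $q_{i-1}\to\tau_i(\eta_0+\delta)$, which concatenated with $\tau_i|_{[\eta_0+\delta,1]}$ gives $\tau_{i-1}$. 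The same correction is needed in the forward (past-version) propagation from $p_+$ to $q$. With this change your proof goes through.
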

\begin{proof} We will use the facts and notations for cylindrical charts introduced prior to Lemma~\ref{lem:icheck_is_ic1}.

\noindent(i)$\Rightarrow$(iii): By covering $\gamma([a,b])$ with cylindrical charts $U$ contained in the given neighbourhood of 
$\gamma([a,b])$ it suffices to show push-up for a curve $\gamma \colon [a,b]\to U$ emanating from $p$ that is future-directed causal 
and such that the set $A:=\{t\in [a,b] : \dot\gamma(t) \text{ exists and is timelike} \}$ has positive Lebesgue-measure 
$\lambda(A)$. Moreover, by Lemma~\ref{lem:lip_is_ac} we only need to work with Lipschitz curves. So we need to establish that 
$q:=\gamma(b)\in I^+(p,U)$. To begin with, suppose that $q\in J^+(p,U)^\circ = \mathrm{epi}_S(f_-)$. Since $I^+(p,U)$ is 
dense in $J^+(p,U)$ by assumption and since $\eta_{C^{-1}}\prec g$ on $U$, we can find an element $q'\in I^+(p,U)$ such that $q$ lies in 
the $\eta_{C^{-1}}$-future of $q'$, and thereby in $I^+(p,U)$ itself. Note that this argument in fact shows that 
$J^+(p,U)^\circ \sse I^+(p,U)$. We are therefore left with the case $q\in \partial J^+(p,U)$. Suppose, then, that such a $q$ were not contained in $I^+(p,U)$. 

We claim that there exists a non-trivial interval $[c,d]\sse (a,b)$ such that $\gamma([c,d])\sse I^+(p,U)$ and that $\bar s\in A$ for some $\bar s\in (c,d)$. To see this, we adapt an 
argument from the proof of~\cite[Prop.\ 1.21]{CG:12}. As was shown there, 
%$\gamma$ cannot lie entirely on the graph of $f_-$. More precisely, 
if $t\in A$ and $\gamma(t)\in \partial J^+(p,U)$ then there exists some $\eps>0$ such that $\gamma((t,t+\eps))$ lies in $J^+(p,U)^\circ$ and thereby in $I^+(p,U)$ by the above. 
Arguing analogously to the past, it follows that $\eps$ can be chosen so small that
also $\gamma((t-\eps,t))$ is disjoint from $\partial J^+(p,U)$ (in fact, lies in $\mathrm{hyp}_S(f_-)$). To prove our claim, it suffices
to show that there exists some $s\in A$ with $\gamma(s)\not\in \partial J^+(p,U)$ (hence $\gamma(s)\in
\mathrm{epi}_S(f_-)$), as then
also a non-trivial interval around $s$ is mapped by $\gamma$ to $\mathrm{epi}_S(f_-)$.
So it only remains to exclude the possibility that $\gamma(s)\in \partial J^+(p,U)$ for each
$s\in A$. To do this, we note that since $\lambda(A)>0$, $A$ must contain
accumulation points of $A$ (otherwise $A$ would consist of isolated points and thereby be countable).
However, if $s\in A$ is such an accumulation point and $s_k$ is a sequence in $A$ converging
to $s$, then choosing $\eps$ as above for $s$ we obtain that $s_k\in (s-\eps,s+\eps)\setminus
\{s\}$ for $k$ large, contradicting the assumption that $\gamma(A)\sse \partial J^+(p,U)$.

Now let $f_q^-$ be the graphing function of $\partial J^-(q,U)$.\footnote{It follows from the proof of~\cite[Prop.\ 1.10]{CG:12}
	that the cylindrical neighbourhood $U$ of $p$ can be chosen `locally uniformly' in the sense that each $q\in U$ possesses a
	cylindrical chart with domain $U$.} Then no point $r$ in $\gamma([c,d])$ can lie strictly below the graph of $f_q^-$ 
since otherwise 
$r \in I^-(q,U) \cap I^+(p,U)$, and a fortiori $q\in I^+(p,U)$, contrary to our assumption. Hence $\gamma|_{[c,d]}$ must lie
on the graph of $f_q^-$. But then the same argument as above, using $f_q^-$ and working in the past 
direction leads to a contradiction since at $\gamma(\bar s)$ the curve must enter $I^-(q,U)$. Summing up, we conclude
that $q\in I^+(p,U)$, as claimed.

\noindent(iii)$\Rightarrow$(ii): Since $\partial_t$ is future-directed timelike, (iii) implies that 
$J^+(p,U)^\circ = \mathrm{epi}_S(f_-) \sse I^+(p,U)$, so $\partial I^+(p,U) = \overline{I^+(p,U)}\setminus I^+(p,U)^\circ
\subseteq \partial J^+(p,U)$. Conversely, considering a vertical line through any point $q$ in $\partial J^+(p,U)$ and using
push-up again, it follows
that in any neighbourhood of $q$ there lie points from $I^+(p,U)^\circ$ as well as points from the complement of $\overline{I^+(p,U)}$,
so $q\in \partial I^+(p,U)$.

\noindent(ii)$\Rightarrow$(i): Since $J^+(p,U)=\mathrm{epi}(f_-) = \overline{\mathrm{epi}_S(f_-)}=\overline{J^+(p,U)^\circ}$,
it suffices to show that $J^+(p,U)^\circ \sse I^+(p,U)$. Suppose, to the contrary, that there exists some $q\in 
J^+(p,U)^\circ \setminus I^+(p,U)$. We can then connect $q$ via a continuous path $\alpha$ that runs entirely in $J^+(p,U)^\circ$
with a point in $I^+(p,U)$ (e.g., with any point on the positive $t$-axis). But then $\alpha$ has to intersect
$\partial I^+(p,U)$, producing a point in $\partial I^+(p,U)\setminus \partial J^+(p,U)$, a contradiction.
\end{proof}
\begin{remark} We note that the equivalence between the push-up property and the absence
of causal bubbles was first observed, even in the more general setting of closed cone
structures (with chronological futures/pasts defined via piecewise $\Con^1$-curves) 
by E.\ Minguzzi in \cite[Thm.\ 2.8]{Min:19}. 
\end{remark}

Concerning the relationship between the various bubble sets we have:
\begin{lemma}\label{lem:bubble_inclusions}
Let $(M,g)$ be a continuous spacetime and let $U$ be a cylindrical neighbourhood of $p\in M$. 
Then 
\begin{equation}\label{eq:bubble_inclusion}
\{ f_-(x) < t <  f_+(x) \} = \mathcal{B}^+(p,U) \sse
\overline{\mathcal{B}^+_{\mathrm{int}}(p,U)} \cup \mathcal{B}^+_{\mathrm{ext}}(p,U) \sse \{ f_-(x) \le t \le f_+(x)\},
\end{equation}
and analogously for the past-directed case.
\end{lemma}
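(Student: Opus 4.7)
The first equality $\mathcal{B}^+(p,U) = \{f_-(x) < t < f_+(x)\}$ is already recorded in the discussion preceding Lemma~\ref{lem:icheck_is_ic1}, so the task reduces to the two inclusions. The plan is to work entirely within the cylindrical chart, where we have the characterisations $J^+(p,U) = \mathrm{epi}(f_-) = \{t\ge f_-(x)\}$, $J^+(p,U)^\circ = \{t > f_-(x)\}$, and $\check I^+(p,U) = \{t > f_+(x)\}$, with $f_-$ Lipschitz (in particular continuous) and $f_+$ continuous as well (since it is likewise obtained as a pointwise limit of Lipschitz graphing functions $f_-$ for metrics $\check g_k\prec g$, $\check g_k\to g$). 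It then suffices to recognise which of $\mathcal{B}^+_{\mathrm{int}}(p,U)$ or $\mathcal{B}^+_{\mathrm{ext}}(p,U)$ a given point of $\mathcal{B}^+(p,U)$ sits in (or in the closure of).

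For the middle inclusion, I would take $q=(t,x)$ with $f_-(x)<t<f_+(x)$ and perform the dichotomy $q\in\overline{I^+(p,U)}$ versus $q\notin\overline{I^+(p,U)}$. In the second case, since $q\in J^+(p,U)^\circ\sse J^+(p,U)$, one gets $q\in J^+(p,U)\setminus\overline{I^+(p,U)}=\mathcal{B}^+_{\mathrm{ext}}(p,U)$ directly. In the first case, pick a sequence $q_n=(t_n,x_n)\in I^+(p,U)$ with $q_n\to q$. The key observation is that the strict inequality $t<f_+(x)$ together with continuity of $f_+$ forces $t_n<f_+(x_n)$ for all large $n$, so $q_n\notin \check I^+(p,U)$ and hence $q_n\in I^+(p,U)\setminus \check I^+(p,U)=\mathcal{B}^+_{\mathrm{int}}(p,U)$; this yields $q\in \overline{\mathcal{B}^+_{\mathrm{int}}(p,U)}$.

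For the final inclusion, I would treat the two pieces separately. On the one hand, $\mathcal{B}^+_{\mathrm{int}}(p,U)=I^+(p,U)\setminus \check I^+(p,U)$ is contained in $J^+(p,U)\cap\{t\le f_+(x)\}=\{f_-(x)\le t\le f_+(x)\}$, and since the latter set is closed in $U$ by continuity of $f_\pm$, the same containment passes to the closure. On the other hand, for $\mathcal{B}^+_{\mathrm{ext}}(p,U)=J^+(p,U)\setminus\overline{I^+(p,U)}$, the lower bound $t\ge f_-(x)$ is immediate from $J^+(p,U)=\mathrm{epi}(f_-)$, while the upper bound follows from $\check I^+(p,U)\sse I^+(p,U)$, which gives $\overline{\check I^+(p,U)}=\{t\ge f_+(x)\}\sse\overline{I^+(p,U)}$, so any $q$ in $\mathcal{B}^+_{\mathrm{ext}}(p,U)$ must satisfy $t<f_+(x)$.

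The argument is mostly bookkeeping once the graphing-function descriptions of $\check I^+(p,U)$, $J^+(p,U)$ and their boundaries are in place; the only step that requires a small idea is the approximation argument in the middle inclusion, where continuity of $f_+$ is essential to push the sequence $(q_n)$ off $\check I^+(p,U)$. Everything else is a straightforward comparison of the closed sets $\{t\ge f_\pm(x)\}$ with the open set $\check I^+(p,U)$ and the closed set $J^+(p,U)$.
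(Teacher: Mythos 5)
Your proof is correct and follows essentially the same route as the paper: both arguments reduce everything to the graphing-function descriptions $J^+(p,U)=\mathrm{epi}(f_-)$, $\check I^+(p,U)=\{t>f_+(x)\}$, and settle the middle inclusion by the dichotomy $q\in\overline{I^+(p,U)}$ versus $q\notin\overline{I^+(p,U)}$. The only cosmetic difference is that you verify $\overline{I^+(p,U)}\setminus\overline{\check I^+(p,U)}\subseteq\overline{\mathcal{B}^+_{\mathrm{int}}(p,U)}$ by an explicit sequence argument using continuity of $f_+$, where the paper invokes the corresponding set-theoretic inclusion directly.
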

\begin{proof} By~\cite[Prop.\ 1.10]{CG:12} and~\cite[Prop.\ 1.21]{CG:12}, the bubble set $\mathcal{B}^+(p,U)$ is the 
intersection of $\mathrm{epi}_S(f_-)$ and the strict hypograph $\mathrm{hyp}_S(f_+)$ of the graphing function 
$f_+$ of $\partial \check I^+(p,U)$. From this, the second inclusion follows immediately. 
To see the first one, note that by the properties of cylindrical neighbourhoods detailed
before Lemma \ref{lem:icheck_is_ic1}, any $p\in \mathcal{B}^+(p,U)$ is contained
in $J^+(p,U)\setminus \overline{\check I^+(p,U)}$. Now suppose, in addition, that $p\not\in \overline{\mathcal{B}^+_{\mathrm{int}}(p,U)} \supseteq \overline{I^+(p,U)}\setminus
\overline{\check I^+(p,U)}$. Then $p\in J^+(p,U)\setminus \overline{I^+(p,U)} = \mathcal{B}^+_{\mathrm{ext}}(p,U)$.
\end{proof}

From Lemma~\ref{lem:icheck_is_ic1} and Lemma~\ref{lem:lip_is_ac} we obtain:
\begin{theorem}\label{thm:all_i_equal_for_causally_plain}
Let $(M,g)$ be a continuous spacetime. The following are equivalent:
\begin{itemize}
\item[(i)] For any $p\in M$, 
\[
I^\pm_{\mathcal{C}^\infty}(p) = I^\pm_{\copw}(p) = I^\pm_{\mathcal{L}}(p) =  I^\pm_{\mathcal{AC}}(p).
\]
\item[(ii)] There is no internal bubbling, i.e., $\mathcal{B}^{\pm}_{\mathrm{int}}(p,U)=\emptyset$ for every $p\in M$ and every 
cylindrical neighbourhood $U$ of $p$.
\item[(iii)] $I^\pm(p)=\check{I}^\pm(p)$ for every $p\in M$.
\end{itemize}
\end{theorem}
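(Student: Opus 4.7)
For the equivalence (i) $\Leftrightarrow$ (iii), the two preceding lemmas do the job: Lemma~\ref{lem:icheck_is_ic1} identifies the first two sets in (i) with $\check I^\pm(p)$, while Lemma~\ref{lem:lip_is_ac} identifies the last two with $I^\pm(p)$. The entire chain of equalities in (i) therefore collapses to the single equality $\check I^\pm(p)=I^\pm(p)$, which is exactly (iii).

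For (ii) $\Rightarrow$ (iii), I would use a standard cover-and-concatenate argument. Given $q\in I^+(p)$ realised by a Lipschitz future-directed timelike curve $\gamma\colon[0,1]\to M$ from $p$ to $q$, cover the compact image by finitely many cylindrical neighbourhoods $U_1,\dots,U_n$ and pick a partition $0=t_0<\dots<t_n=1$ with $\gamma([t_{i-1},t_i])\sse U_i$. In each $U_i$, the hypothesis $\mathcal{B}^+_{\mathrm{int}}(\gamma(t_{i-1}),U_i)=\emptyset$ from (ii), combined with the local version of Lemma~\ref{lem:icheck_is_ic1} applied to the continuous spacetime $(U_i,g|_{U_i})$, yields $\gamma(t_i)\in I^+(\gamma(t_{i-1}),U_i)=\check I^+(\gamma(t_{i-1}),U_i)=I^+_{\copw}(\gamma(t_{i-1}),U_i)$, so the Lipschitz segment $\gamma|_{[t_{i-1},t_i]}$ is replaced by a piecewise $\mathcal{C}^1$ future-directed timelike curve in $U_i$. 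Concatenating these pieces (the two-sided tangents at the break points are future-directed timelike by the global time orientation) produces a piecewise $\mathcal{C}^1$ future-directed timelike curve from $p$ to $q$ in $M$, so $q\in I^+_{\copw}(p)=\check I^+(p)$.

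For (iii) $\Rightarrow$ (ii), I would argue contrapositively via a dual covering argument performed \emph{inside} $U$. Suppose $q\in I^+(p,U)\setminus\check I^+(p,U)$ for some cylindrical $U$, realised by a Lipschitz timelike curve $\gamma\colon[0,1]\to U$. Cover the compact image of $\gamma$ by cylindrical sub-neighbourhoods $V_1,\dots,V_n\sse U$ with partition $0=s_0<\dots<s_n=1$ and $\gamma([s_{i-1},s_i])\sse V_i$. For each $i$, (iii) applied at $\gamma(s_{i-1})$ gives $\gamma(s_i)\in\check I^+(\gamma(s_{i-1}))$, so there exist a smooth $\check g_i\prec g$ and a $\check g_i$-timelike smooth curve from $\gamma(s_{i-1})$ to $\gamma(s_i)$ in $M$. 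Since each smooth Lorentzian metric $\check g_i$ admits convex normal neighbourhoods, by refining the partition so that $\gamma(s_i)$ lies in a $\check g_i$-convex normal neighbourhood of $\gamma(s_{i-1})$ contained in $V_i$, the connecting $\check g_i$-geodesic can be chosen inside $V_i\sse U$. Concatenating these geodesics yields a piecewise smooth timelike curve from $p$ to $q$ inside $U$, so $q\in I^+_{\copw}(p,U)=\check I^+(p,U)$ by the local Lemma~\ref{lem:icheck_is_ic1}, contradicting the initial hypothesis. The main obstacle here is the coherent choice of partition together with the smooth metrics $\check g_i$: one has to arrange simultaneously that $\gamma(s_i)$ sits in a $\check g_i$-normal neighbourhood fitting inside $V_i$ and that the geodesic joining the two remains $\check g_i$-timelike throughout. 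I would exploit the cylindrical bounds $\eta_{C^{-1}}\prec g\prec\eta_C$ to obtain uniform control over the sizes of admissible normal neighbourhoods, together with compactness of $\gamma([0,1])$, to ensure that finitely many refinements suffice.
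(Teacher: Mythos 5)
Your reduction of (i) to (iii) via Lemmas~\ref{lem:icheck_is_ic1} and \ref{lem:lip_is_ac} is precisely the content of the paper's one-line derivation, and your covering-and-concatenation argument for (ii)$\Rightarrow$(iii) is sound and is the natural way to make the local-to-global step explicit. The only small caveat there is that hypothesis (ii) applies to cylindrical neighbourhoods \emph{centred at} a given point, so the $U_i$ must be chosen as cylindrical charts centred at the partition points $\gamma(t_{i-1})$; this is arranged by the ``locally uniform'' choice of cylindrical neighbourhoods mentioned in the footnote to the proof of Theorem~\ref{thm:push-up_ext_bubble}.

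The direction (iii)$\Rightarrow$(ii) is where your argument has a genuine gap. From (iii) you correctly obtain a smooth $\check g_i\prec g$ and a $\check g_i$-timelike curve from $\gamma(s_{i-1})$ to $\gamma(s_i)$ \emph{somewhere in} $M$. You then place $\gamma(s_i)$ in a $\check g_i$-convex normal neighbourhood of $\gamma(s_{i-1})$ contained in $V_i$ and conclude that the radial $\check g_i$-geodesic is timelike. That inference is false: for a smooth Lorentzian metric, $q\in I^+_{\check g}(p)$ together with $q$ lying in a convex neighbourhood $C$ of $p$ does \emph{not} give $q\in I^+_{\check g}(p,C)$ --- the global timelike curve may leave $C$ and the radial geodesic may be spacelike (on a flat cylinder, two points that are spacelike separated inside a convex neighbourhood can be chronologically related by a curve winding around). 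What you would actually need is an l.u.t.\ curve from $\gamma(s_{i-1})$ to $\gamma(s_i)$ \emph{inside} $V_i$, i.e.\ exactly the local statement $\mathcal{B}^+_{\mathrm{int}}(\gamma(s_{i-1}),V_i)=\emptyset$ that you are trying to establish; the only local datum available is the Lipschitz timelike segment $\gamma|_{[s_{i-1},s_i]}$, and upgrading it to an l.u.t.\ segment within $V_i$ is the whole content of the implication, so the argument is circular at this point. The unresolved interdependence between the choice of partition and the metrics $\check g_i$, which you flag yourself, is a symptom of the same problem and is not cured by the uniform bounds $\eta_{C^{-1}}\prec g\prec\eta_C$, since those control cone widths but not the global-versus-local chronology issue. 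The paper itself presents the theorem as following from the two lemmas together with the definition $\mathcal{B}^\pm_{\mathrm{int}}(p,U)=I^\pm(p,U)\setminus\check I^\pm(p,U)$ and does not spell out this global-to-local step either; your proposal makes the attempt explicit but does not close it.
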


\begin{corollary}
\label{cor:2.15}
Let $(M, g)$ be a continuous spacetime. The following are equivalent: 
\begin{enumerate}[label={(\roman*)}]
\item $(M, g)$ is causally plain. 
\item There is neither internal nor external bubbling, i.e. $\mathcal{B}^{\pm}_{\mathrm{int}}(p,U) = \mathcal{B}^{\pm}_{\mathrm{ext}}(p,U)=\emptyset$, for all $p \in M$. 
\end{enumerate}
\end{corollary}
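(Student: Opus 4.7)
The plan is a short synthesis of Lemma~\ref{lem:bubble_inclusions} with Theorems~\ref{thm:push-up_ext_bubble} and~\ref{thm:all_i_equal_for_causally_plain}, treating the two implications separately.

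For $(ii) \Rightarrow (i)$, I would apply Lemma~\ref{lem:bubble_inclusions} directly: the sandwich inclusion
\[
\mathcal{B}^\pm(p,U) \sse \overline{\mathcal{B}^\pm_{\mathrm{int}}(p,U)} \cup \mathcal{B}^\pm_{\mathrm{ext}}(p,U)
\]
forces the open bubble $\mathcal{B}^\pm(p,U)$ to vanish as soon as both right-hand-side bubbles do. Since $\mathcal{B}^+(p,U) = \{f_-(x) < t < f_+(x)\}$ and the inclusion $\check I^+(p,U) = \mathrm{epi}_S(f_+) \sse \mathrm{epi}(f_-) = J^+(p,U)$ already forces $f_- \le f_+$ pointwise, the vanishing of $\mathcal{B}^+(p,U)$ is equivalent to $f_- = f_+$, i.e.\ to $\partial \check I^+(p,U) = \partial J^+(p,U)$. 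The past-directed case is identical, so $(M,g)$ is causally plain.

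For $(i) \Rightarrow (ii)$, I would first translate causal plainness into $f_- = f_+$ in every cylindrical chart, so that $\check I^\pm(p,U) = J^\pm(p,U)^\circ$ and $\overline{\check I^\pm(p,U)} = J^\pm(p,U)$. The external bubble vanishes by a one-line topological argument: from $\check I^\pm(p,U) \sse I^\pm(p,U)$ one obtains $\overline{I^\pm(p,U)} \supseteq J^\pm(p,U)$, whence $\mathcal{B}^\pm_{\mathrm{ext}}(p,U) = \emptyset$ (in agreement with Theorem~\ref{thm:push-up_ext_bubble}, which ties absence of external bubbling to push-up --- a property already known to hold in causally plain spacetimes by~\cite{CG:12}). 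For the internal bubble I would invoke the equivalence (ii) $\Leftrightarrow$ (iii) in Theorem~\ref{thm:all_i_equal_for_causally_plain}, reducing the task to the global identity $I^\pm(p) = \check I^\pm(p)$, which is the classical characterisation of causal plainness established in~\cite[Prop.~1.21]{CG:12}.

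The hard part is precisely this last step: while ruling out external bubbling is a topological triviality once $f_- = f_+$ is known, ruling out internal bubbling cannot be read off the graphing functions alone. One must import the non-trivial fact from~\cite{CG:12} that in a causally plain spacetime every Lipschitz $g$-timelike connection can be realised by a locally uniformly timelike curve; without this there is no a priori obstruction to a point of $\partial \check I^\pm(p,U) = \partial J^\pm(p,U)$ lying in $I^\pm(p,U)$, and Theorem~\ref{thm:all_i_equal_for_causally_plain} could not be applied to close the argument.
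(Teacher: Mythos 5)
Your proposal is correct and follows essentially the same route as the paper: Lemma~\ref{lem:bubble_inclusions} for (ii)$\Rightarrow$(i), and \cite[Prop.~1.21]{CG:12} combined with Theorem~\ref{thm:all_i_equal_for_causally_plain} for the absence of internal bubbling in (i)$\Rightarrow$(ii). The only (harmless) variation is that you dispose of the external bubbles by a direct density argument from $f_-=f_+$, whereas the paper invokes the push-up property of causally plain spacetimes (\cite[Lemma~1.22]{CG:12}) together with Theorem~\ref{thm:push-up_ext_bubble} --- both work, and you note the latter route yourself.
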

\begin{proof}
Let $(M, g)$ be causally plain. By~\cite[Prop.~1.21]{CG:12}, we have $\check{I}^{\pm}(p) = I^{\pm}(p)$ for all $p \in M$, so there is no interior bubbling. By~\cite[Lemma~1.22]{CG:12}, $(M, g)$ has the push-up property, so, by Theorem~\ref{thm:push-up_ext_bubble}, we have $\mathcal{B}^{\pm}_{\mathrm{ext}}(p,U) = \emptyset$. 

The other direction follows directly from Lemma~\ref{lem:bubble_inclusions}. 
\end{proof}

Any spacetime with a Lipschitz continuous metric is causally plain by~\cite[Cor.\ 1.17]{CG:12} and so, by 
Corollary~\ref{cor:2.15}, such a spacetime has neither internal nor external bubbling. Moreover, by 
Theorem~\ref{thm:all_i_equal_for_causally_plain}, for spacetimes without internal bubbling, it does not matter which type of curves is used in the definition of chronological futures and pasts.

After these preparations we can now formulate the questions that will be addressed in the remainder of this paper.  
Let $(M,g)$ be a continuous spacetime. The following two questions have already 
been posed in~\cite{CG:12}:
\begin{enumerate}[label={(Q\arabic*)}, ref={Q\arabic*}]
\item\label{Q1}
Is $I^\pm(p)$ open for each $p$?
\item\label{Q2}
Is $I^\pm(p)=\check I^\pm(p)$ for each $p$?
\end{enumerate}
According to Theorem~\ref{thm:all_i_equal_for_causally_plain},~(\ref{Q2}) is equivalent to the question
\begin{itemize}
\item[(Q2${}^{\prime}$)] Is it true that, for all $p\in M$, $I^\pm_{\mathcal{C}^\infty}(p) = I^\pm_{\copw}(p) = 
I^\pm_{\mathcal{L}}(p) =  I^\pm_{\mathcal{AC}}(p)$?
\end{itemize}
Obviously, for spacetimes such that~(\ref{Q2}) can be answered affirmatively, the same is true for~(\ref{Q1}). It is
natural to ask for the converse of this implication:
\begin{enumerate}[resume*]
\item\label{Q3}
Does an affirmative answer to~(\ref{Q1}) for a given spacetime imply the same for~(\ref{Q2})?
\end{enumerate}
Heuristically speaking, to call a set a bubble, one would expect it to have a non-empty interior. A natural question that therefore arises is the following. 
\begin{enumerate}[resume*]
\item\label{Q4}
Is $\check{I}^+(p)$ dense in $I^+(p)$, i.e., is $I^\pm(p) \subseteq \overline{\check I^\pm(p)}$, for all 
$p \in M$? 
\end{enumerate}
Finally, we address the relation between bubbling and interior bubbling: As noted above, non-bubbling,
i.e., causally plain, spacetimes cannot exhibit interior bubbling. On the other hand, in the only 
currently known examples of bubbling metrics (\cite[Ex.\ 1.11]{CG:12}) there is no interior bubbling,
so all bubble sets are exterior (i.e., lie outside of $\overline{I^+(p,U)}$). Another natural question is therefore:
\begin{enumerate}[resume*]
\item\label{Q5}
Given a non-causally plain spacetime, are all bubble sets of the same type (interior or exterior)? 
\end{enumerate}
In the following section, we provide examples that answer all of these questions in the negative.

\section{Counterexamples}\label{sec:counter_examples}

In this section, we present examples of Lorentzian metrics which show that the answers to the questions~(\ref{Q1})--(\ref{Q5}) are negative. The metrics are all defined on a two-dimensional domain $\Omega \subseteq \R^2$ with coordinates $(t, x)$, and are taken to be of the form 
\bel{metric}
\g := 2 \left[ - \sin 2\theta(t, x) \, dt^2 - 2 \cos 2\theta(t, x) \, dx \, dt + \sin 2\theta(t, x) \, dx^2 \right]
\ee
for an appropriately chosen function $\theta(t, x)$. Note that this metric has the property that the vectors 
\[
v_1 := \cos\theta(t, x) \, \partial_x + \sin\theta(t, x) \, \partial_t, 
\qquad 
v_2 := \cos\theta(t, x) \, \partial_t - \sin\theta(t, x) \, \partial_x
\]
are null, and $\g(v_1, v_2) = -2$. 

\begin{example}
\label{number1}
Let $\Omega := \R^2$ and $0 < \alpha < 1$. We define the function 
\[
\theta(t, x) \equiv \theta(x) := \begin{cases} 0, &x < -1,\\ 
\arccos |x|^{\alpha}, &-1 \le x \le 0,\\ 
\frac{\pi}{2}, & x > 0.\end{cases}
\]
With this function $\theta$, the metric~\eqref{metric} is $\alpha$-H\"{o}lder continuous, but not Lipschitz. It is smooth away from $x=0$ and $x=-1$. If we wish to confine the non-smooth behaviour of the metric to the $t$-axis alone, we may restrict to the region $x \in (-1, \infty)$. Alternatively, we may simply smooth out the metric near the set $x=-1$.  

The light cones of this metric are illustrated in Figure~\ref{fig:1}. 
\begin{figure}[h!]
	\begin{center}
		\includegraphics[width=92mm, height= 60mm]{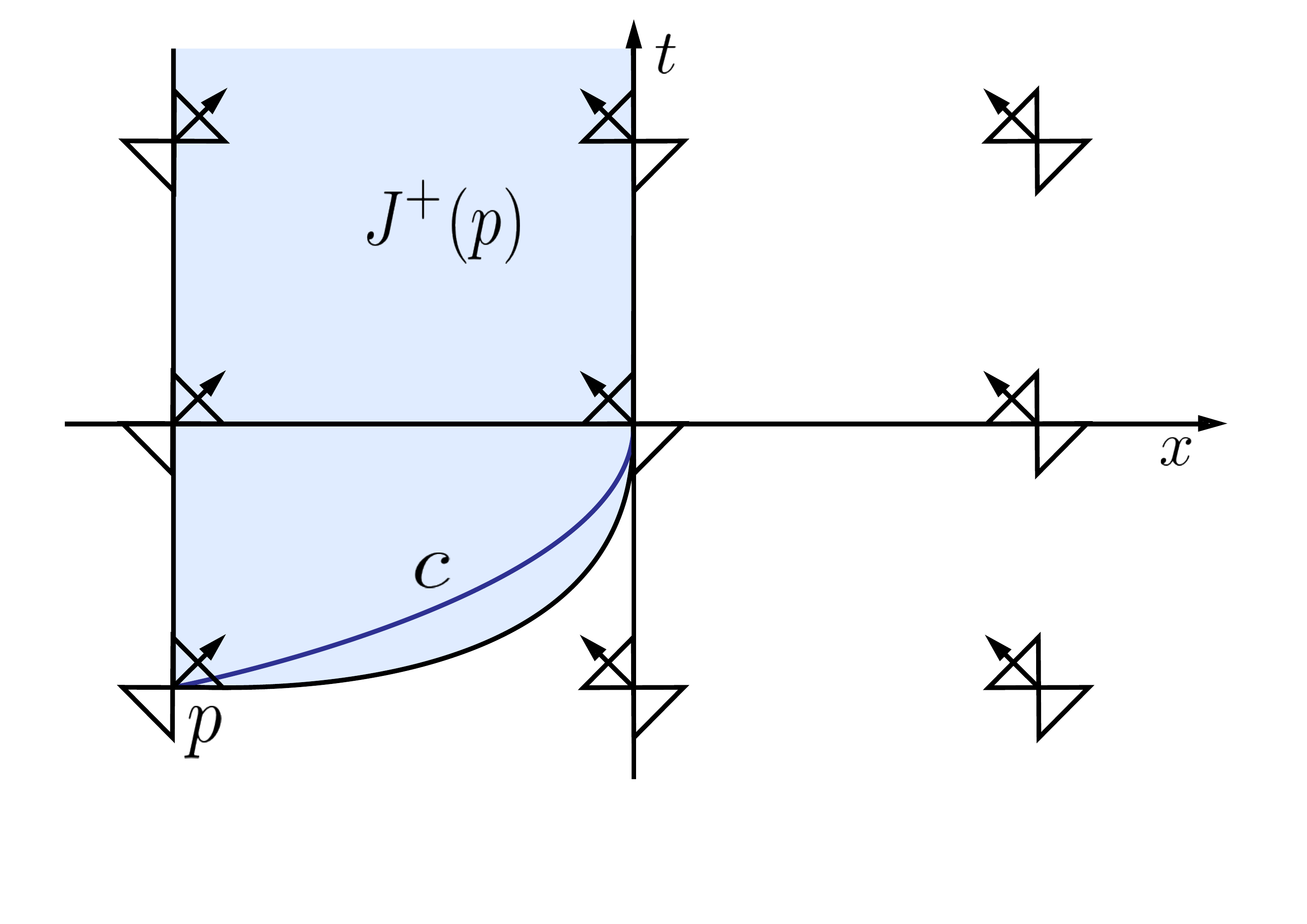}
	\end{center}
	\caption{Timelike curve that reaches $\partial J^+(p)$}
\label{fig:1}
\end{figure}
If we consider a null generator parametrised by its $x$-value leaving a point $p$ with $t(p) = t_0$, $x(p) = -1$ tangent to the $x$-axis, then its tangent vector will be proportional to $v_1$ and, as such, $t(x)$ will satisfy the ordinary differential equation
\[
\frac{dt}{dx} = \tan \theta(t, x) = \frac{\left( 1 - |x|^{2 \alpha} \right)^{1/2}}{|x|^{\alpha}}, 
\]
for $x \in (-1, 0]$ with $t(-1) = t_0$. Therefore, the generator will reach the $t$-axis, i.e. $x = 0$, at time 
\[
t_1 := t_0 + \int_{-1}^0 \frac{\left( 1 - |x|^{2 \alpha} \right)^{1/2}}{|x|^{\alpha}} \, dx < \infty. 
\]
 Note that this argument crucially relies on the fact that $\alpha < 1$. If $g$ were Lipschitz ($\alpha = 1$), for example, the above curve does not reach the $t$-axis in finite time. This is consistent with the fact that Lipschitz metrics are causally plain (cf.~Corollary~\ref{cor:2.15}). 
 
 The generator constructed above reaches the $t$-axis at a finite value $t_1>t_0$, and can then be continued vertically up the $t$-axis as a null generator of $\partial J^+(p)$. Indeed, we note that any absolutely continuous, future-directed, causal curve from a point on the $t$-axis cannot enter the region $x>0$. In particular, let $\alpha \colon [0, \infty) \to \R^2$ be an absolutely continuous, future-directed, causal curve with $\left( x \circ \alpha \right)(0) = 0$, and assume that there exists $s_0 > 0$ such that $\left( x \circ \alpha \right)(s_0) > 0$. Then, there exists a subset $B \subseteq [0, \infty)$ with positive Lebesgue measure such that for all $s \in B$, $\frac{d}{ds} \left( x \circ \alpha \right)(s)$ exists and is strictly positive. (If this were not the case, then $\left( x \circ \alpha \right)(s_0) = 0 + \int_0^{s_0} \frac{d}{ds} \left( x \circ \alpha \right)(s) \, ds \le 0$.) However, any vector of the form $\left( \frac{dt}{ds}, \frac{dx}{ds} \right)$ with $\frac{dt}{ds} \ge 0$ and $\frac{dx}{ds} > 0$ is not future-directed causal in the region $x \ge 0$. This contradicts the assumption that the curve $\alpha$ is future-directed causal, i.e. that $\dot{\alpha}(
s)$ is future-directed causal almost everywhere.

The preceding observation implies that the set $J^+(p)$ consists of the light blue region in Figure~\ref{fig:1}, along with the vertical null generator from $p$ and the right-moving null generator from $p$. (In particular, the subset of the $t$-axis with $t \ge t_1$ is part of the boundary of $J^+(p)$.) 

We now note, however, that any point $(t, 0)$ in the null cone of $p$ with $t > t_1$ can also be reached by a curve from $p$ that is $\mathcal{C}^1$ and timelike at all points except the intersection of the curve with the $t$-axis. In particular, let $q = (\overline{t}, 0)$ with $\overline{t} > t_1$. Then $q \in \partial I^+(p)$. Consider the curve $\gamma \colon (-\eps, 0] \to M$, $s \mapsto \left( t(s), x(s) \right)$ with 
\[
t(s) = \overline{t} + \frac{1}{1-\alpha} A^{1-\alpha} s, \qquad x(s) = - A |s|^{\frac{1}{1-\alpha}}, 
\]
where $A > 0$ is constant. Note that $x(s) < 0$ for all $s \in (-\eps, 0)$ and $x(0) = 0$. Moreover, $\left. \frac{dt}{ds} \right|_{s=0} \neq 0$, $\left. \frac{dx}{ds} \right|_{s=0} = 0$ and, since $\left. \g \right|_{x=0} = 4 \, dt \, dx$, it follows that $\frac{d\gamma}{ds}$ is null at this point. Finally, noting that 
\[
\cos\theta(x(s)) = |x(s)|^{\alpha} = A^{\alpha} \, |s|^{\frac{\alpha}{1-\alpha}}, \qquad \sin\theta(x(s)) = \left( 1-A^{2\alpha} \, |s|^{\frac{2\alpha}{1-\alpha}} \right)^{1/2}, 
\]
we now calculate that 
\begin{align*}
\g \left( \frac{d\gamma}{ds}, \frac{d\gamma}{ds} \right) 
&= -2 \left[ 2 A^{\alpha} \, |s|^{\frac{\alpha}{1-\alpha}} \left( 1-A^{2\alpha} \, |s|^{\frac{2\alpha}{1-\alpha}} \right)^{1/2} \left( - \frac{A^2}{(1-\alpha)^2} |s|^{\frac{2\alpha}{1-\alpha}} \right) \right. 
\\
&\hskip 3cm \left.  
+ 2 \left( A^{2\alpha} \, |s|^{\frac{2\alpha}{1-\alpha}} - 1 \right)  \, \frac{A^{2-\alpha}}{(1-\alpha)^2} |s|^{\frac{\alpha}{1-\alpha}} \right] 
\\
&= - \frac{4}{(1-\alpha)^2} A^{2-\alpha} |s|^{\frac{\alpha}{1-\alpha}} \left[ 2 A^{2\alpha} \, |s|^{\frac{2\alpha}{1-\alpha}} - 1 + \left( 1 - A^{2\alpha} \, |s|^{\frac{2\alpha}{1-\alpha}} \right)^{3/2} \right]
\\
&= - \frac{4}{(1-\alpha)^2} A^{2-\alpha} |s|^{\frac{\alpha}{1-\alpha}} \left[ \frac{1}{2} A^{2\alpha} \, |s|^{\frac{2\alpha}{1-\alpha}} + O(|s|^{\frac{4\alpha}{1-\alpha}}) \right]. 
\end{align*}
As such, the curve $\gamma$ is timelike for small $s < 0$ and null only at $s=0$. It follows that, for small $s_0 < 0$, the point $\gamma(s_0)$ lies in $I^+(p)$. Since the metric $g$ is smooth for $x < 0$, there exists a $\mathcal{C}^1$ timelike curve from $p$ to $\gamma(s_0)$. We may assume, without loss of generality, that the tangent vector of this curve coincides with that of the curve $\gamma$ at the point $\gamma(s_0)$. Concatenating this curve with the restriction of $\gamma$ to $(s_0, 0]$ gives a curve $c \colon [-1, 0] \to \R^2$ with $c(-1) = p$, $c(0) = q \in \partial J^+(p)$, that is $\mathcal{C}^1$ with timelike tangent vector on the open interval $(-1, 0)$ and whose tangent vector at $q$ is null. 

This example has the following properties: 
\begin{itemize}[itemsep=2mm]
\item The chronological future of any point with $x < 0$ is not an open subset of $\R^2$. In particular, $I^+(p)$ consists of the light blue region in Figure~\ref{fig:1}, \emph{including the subset of the $t$-axis with $t > t_1$}. This answers~(\ref{Q1}) in the negative. 
\item $\check{I}^+(p)$ consists of the light blue region in Figure~\ref{fig:1}, \emph{excluding\/} the subset of the $t$-axis with $t > t_1$. As such, $\check{I}^+(p) \neq I^+(p)$. This answers~(\ref{Q2}) in the negative. 
\item The exceptional curve $c$ constructed in the example may be taken to be smooth, with only one point at which it is null. It may be extended in a timelike fashion to give a curve that is smooth except at one point of non-differentiability, and timelike everywhere where its derivative is defined (at which point, the tangent vector from the past direction is null and the tangent vector to the future is timelike). As such, the curve is not timelike in the sense of Definition~\ref{def-cc-cpw}, but it~\emph{is\/} timelike in the sense of Definition~\ref{def-cc-ac}. 
\item Since $\check{I}^+(p)$ is not equal to $I^+(p)$, it follows from Theorem~\ref{thm:all_i_equal_for_causally_plain} and Corollary~\ref{cor:2.15} that the spacetime cannot be causally plain, so there must exist a point $r \in M$ with $\mathcal{B}^{\pm}(r, U) \neq \emptyset$, for some cylindrical neighbourhood $U$ of $r$. Indeed, while the bubble set of $p$ is empty, every point on the $t$-axis has a non-empty past bubble set. This behaviour will be even more pronounced in the following example. 
\end{itemize}
\end{example}

\begin{example}
\label{number2}
Let $(t, x) \in \Omega = \R^2$, and $0 < \alpha < 1$. We take the metric to be of the form~\eqref{metric}, but with  
\[
\theta(t, x) \equiv \theta(x) := \begin{cases} 0, &x < -1, \\ 
\arccos |x|^{\alpha}, &-1 \le x \le 1, \\ 
0, & x > 1. \end{cases}
\]
As in Example~\ref{number1}, the metric~\eqref{metric} is $\alpha$-H\"{o}lder but not Lipschitz, and smooth away from $x = 0, \pm 1$. 

The light cone structure for this metric is illustrated in Figure~\ref{fig:2}. 
\begin{figure}[h!]
	\begin{center}
		\includegraphics[width=92mm, height= 60mm]{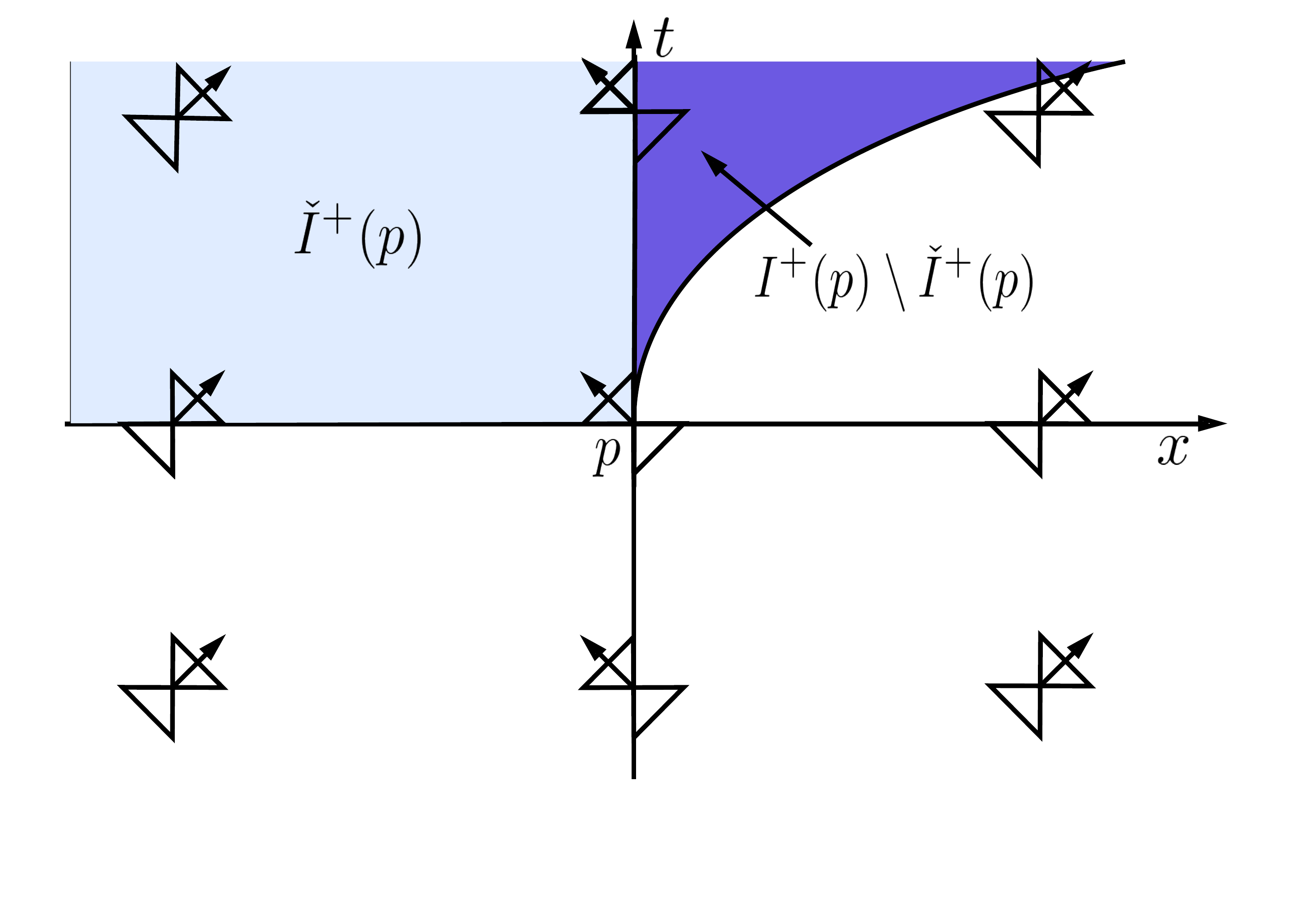}
	\end{center}
	\caption{Interior bubbling}
\label{fig:2}
\end{figure}
For the point $p$ in the figure, the light blue set is the set $\check{I}^+(p)$, while the union of the light blue and dark blue sets (including all points on the $t$-axis above the point $p$) is the set $I^+(p)$. As such, non-trivial internal bubbling occurs for this metric. In particular, 
\begin{itemize}[itemsep=2mm]
\item The set $I^+(q)$ is open for all points $q \in \R^2$. However, for the particular point $p$ in Figure~\ref{fig:2}, we have that $I^\pm(p) \neq \check I^\pm(p)$. This shows that the answer to~(\ref{Q3}) is negative. 
\item For the point $p$ in Figure~\ref{fig:2}, the set $\check{I}^+(p)$ is contained in the region $x < 0$, while $I^+(p)$ contains a subset of the region $x>0$ with non-empty interior. It follows that $\check{I}^+(p)$ is not a dense subset of $I^+(p)$ in this spacetime. Therefore, this example answers~(\ref{Q4}) in the negative. 
\end{itemize}
\end{example}

\begin{example}
\label{number3}
In~\cite{CG:12}, the $\mathcal{C}^{0, \lambda}$ metric 
\bel{CGmetric}
\g = - \left( du + \left( 1 - |u|^{\lambda} \right) dx \right)^2 + dx^2
\ee
for $(u, x) \in \R^2$ with $0 < \lambda < 1$ was considered, which exhibits external bubbling. In our next example, we define a different metric where the external bubbling effect is localised. We then glue this metric into Example~\ref{number2} above to construct a metric where there exists a point $p \in M$ such that $\check{I}^+(p) \subsetneq I^+(p)$ and $\overline{I^+(p)} \subsetneq J^+(p)$ at the same time. 

Let $x \in \R$, $t \in \R$, and $0 < \alpha, \lambda < 1$. Let $\theta_0 := \arccos \left( \frac{1}{2} \right)^{\alpha}$, and fix $\rho \in \left( 0, 1 \right)$ such that $\arctan \rho^{\lambda} < \theta_0$. We define the regions 
\[
A := \left\{ (t, x) \in \left[ -\rho, \rho \right] \times \left[ -\frac{5}{6}, -\frac{2}{3} \right] \right\}, \qquad 
B := \left\{ (t, x) \in \left[ -1, 1 \right] \times \left[ -1, -\frac{1}{2} \right] \right\}. 
\]

We consider the Lorentzian metric~\eqref{metric} where the function $\theta(t, x)$ is defined as follows: 
\begin{itemize}
\item[$\bullet$] For $(t, x) \not\in B$, we define 
\[
\theta(t, x) := \begin{cases} 0, &x < -1, \\ 
\arccos |x|^{\alpha}, &-1 \le x \le 1, \\ 
0, & x > 1; \end{cases}
\]
\item[$\bullet$] For $(t, x) \in A$, let 
\[
\theta(t, x) := \arctan |t|^{\lambda}; 
\]
\item[$\bullet$] We choose the function $\theta$ on the set $B \setminus A$ in such a way that 
\begin{enumerate}[label={(\roman*). }, itemsep=2mm]
\medskip
\item $\theta$ is continuous on $M \equiv \R^2$; 
\item For each fixed $t \in [-1, 1]$ the angle $\theta(t, x)$ is a non-decreasing function of $x \in [-1, 0]$, i.e., for each fixed $t \in \left[ - \rho, \rho \right]$, the function $x \mapsto \theta(t, x)$ is non-decreasing; 
\item The light cones match up to those on the boundary of the sets $A$ and $B$. In particular, we require
\[
\theta\left( t, -1 \right) = 0, \qquad \theta\left( t, -\frac{1}{2} \right) = \arccos \left( \frac{1}{2} \right)^{\alpha}, \qquad \forall t \in [-1, 1], 
\]
and
\[
\theta\left( t, -\frac{5}{6} \right) = \theta\left( t, -\frac{2}{3} \right) = \arctan |t|^{\lambda}, \qquad \forall t \in [-\rho, \rho]. 
\]
\end{enumerate}
\end{itemize}

\begin{remark}
The function $\theta(t, x)$ may be chosen to be smooth away from the $t$-axis and the points of the set $A$ that lie on the $x$-axis. Globally, we then have that $\theta$ has H\"older regularity $\mathcal{C}^{0, \beta}$, with $\beta := \min (\alpha, \lambda)$. 
\end{remark}

The right-moving null generators leaving a point $t=0$, $x \in (-\frac{5}{6}, -\frac{2}{3})$ in the region $A$ satisfy 
\[
\frac{dt}{dx} = |t|^{\lambda}. 
\]
The non-uniqueness of the solutions to this equation leads to the same external bubbling effect demonstrated by the metric~\eqref{CGmetric} (cf.~\cite{CG:12}). In our case, however, this non-uniqueness effect is localised within the region $A$. 
\end{example}

\begin{figure}[h!]
	\begin{center}
		\includegraphics[width=92mm, height= 60mm]{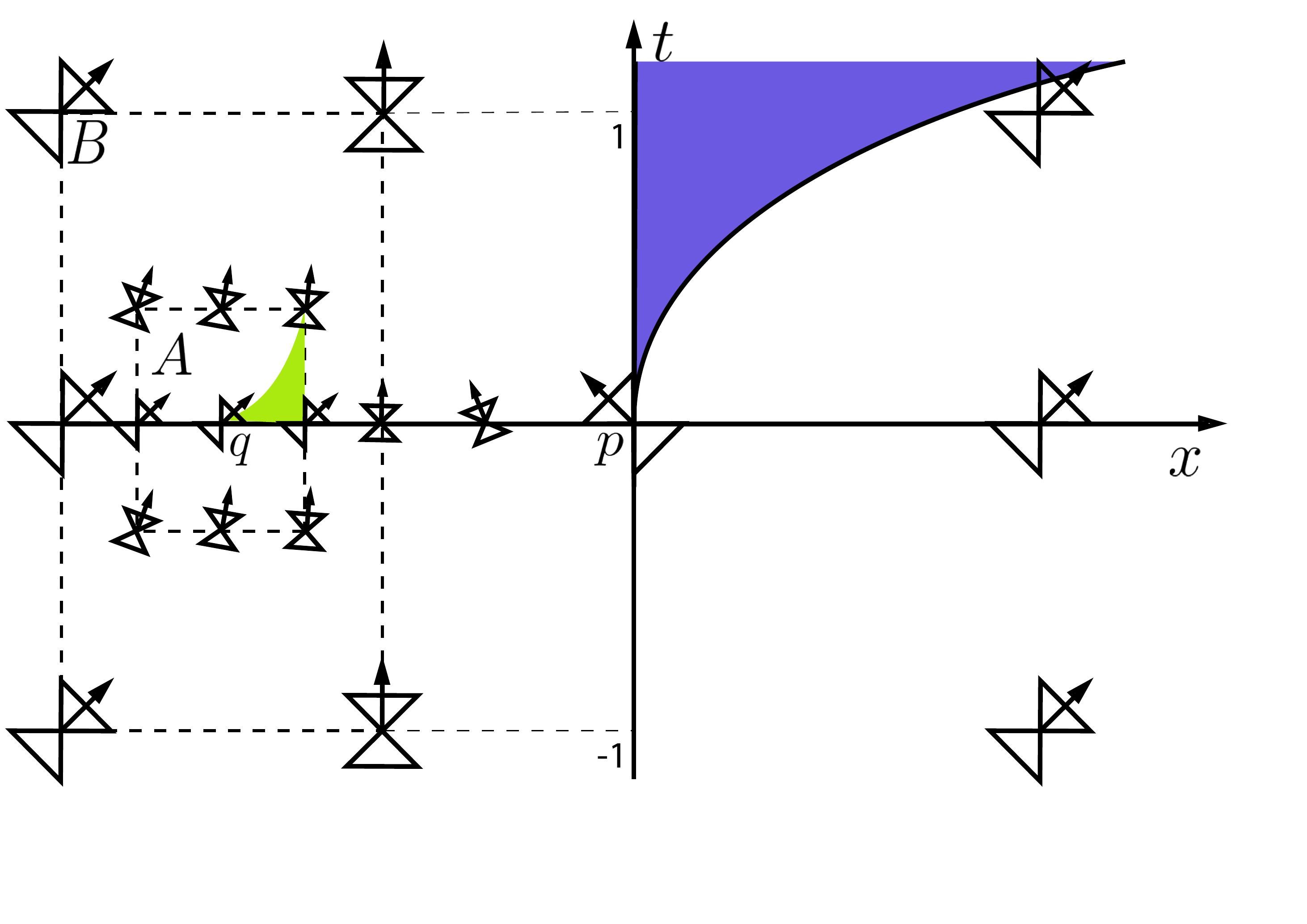}
	\end{center}
	\caption{Parts of the internal and external bubbles of $p$ and $q$, respectively, in Example~\ref{number3}.}
\label{fig:3}
\end{figure}

Let $q = (0, x_0)$ be a point in the interior of the set $A$. We denote by $\gamma_0$ the right-moving null generator from $q$ that immediately leaves the $x$-axis at $q$, i.e. $t(\gamma_0(s)) > 0$ for all $s > 0$. However, right-moving null generators from $q$ can travel along the $x$-axis for a finite time, and then leave the $x$-axis. We denote by $\gamma_1$ the right-moving null generator from $q$ that leaves the $x$-axis at the largest value of $x$, i.e. $\gamma_1$ leaves the $x$-axis at the point $(0, x_1)$ where $x_0 < -\frac{2}{3} \le x_1 < -\frac{1}{2}$. The null generators $\gamma_0$, $\gamma_1$ later intersect the $t$-axis at the points $(t_0, 0)$, $(t_1, 0)$, respectively, where $t_1 < t_0$. The set $J^+(q) \setminus \overline{I^+(q)}$ contains the area between $\gamma_0$ and $\gamma_1$ up to the $t$-axis, and hence has non-empty interior. As in Example~\ref{number2}, for any $\overline{t} > t_0$, the point $(\overline{t}, 0) \in I^+(q)$ and $I^+(\overline{t}, 0)$ contains a subset of the set $x 
> 0$ with non-empty interior. Since $\check{I}^+(q)$ is contained in the set $x < 0$, it follows that $I^+(q) \setminus \check{I}^+(q)$ has non-empty interior as well. 

This example therefore answers~(\ref{Q5}) in the negative, and indeed shows that there exist points $q$ in the spacetime such that $\mathcal{B}^+_{\mathrm{ext}}(q, U)$ is non-empty for some cylindrical neighbourhood $U$ of $q$, and $\check{I}^+(q) \neq I^+(q)$. 
 
\begin{remark}
In all of the above examples, the exceptional curves can, in fact, be chosen to be $\mathcal{C}^1$ with non-timelike tangent vector at only one point. Heuristically, one might expect that the seemingly innocuous change of allowing $\mathcal{C}^1$ curves that have timelike tangent vector except at a finite number of points at which the tangent vector may be null should not affect chronological pasts and futures. However, our examples show that in fact all of the pathologies exhibited above would persist if this definition were adopted. 
\end{remark}

\section{Conclusions}
The following diagram collects the (non-)implications between the causal regularity
properties we have studied in the previous sections. Let $(M,g)$ be a continuous spacetime and recall that 
$I^\pm(p):=I^\pm_{\mathcal{AC}}(p)=I^\pm_{\mathcal{L}}(p)$. Then:

\begin{center}
	\begin{tikzcd}[column sep = tiny]
		& & & \fbox{$g$ Lipschitz} 
		\arrow[d,rightarrow,shift right=1ex]  & &   & \\
		& & & \fbox{$g$ causally plain} 
		\arrow[d,leftrightarrow,"\textcircled{\raisebox{0pt}{$2$}}"] 
		\arrow[negated,u,rightarrow,shift right=1ex,"\hspace*{3pt} \textcircled{\raisebox{0pt}{$1$}}"']  & &   & \\
		& & & \fbox{$\forall (p,U): \mathcal{B}^\pm_{\mathrm{int}}(p,U) = \emptyset = \mathcal{B}^\pm_{\mathrm{ext}}(p,U)$}  
		\arrow[dl,rightarrow,shift right=2ex,shorten <=4ex,"\textcircled{\raisebox{0pt}{$3$}}"'] 
		\arrow[dr,rightarrow,shift left=2.2ex,shorten <=4ex,"\textcircled{\raisebox{0pt}{$4$}}"] & & &\\[3em]
		& & \fbox{$\forall (p,U): \mathcal{B}^\pm_{\mathrm{int}}(p,U) = \emptyset $ } \hphantom{x} 
		\arrow[negated,ur,rightarrow,shift right=0.1ex,shorten <=4ex] 
		\arrow[d,leftrightarrow,,"\textcircled{\raisebox{0pt}{$5$}}"]
		& & \fbox{$\forall (p,U): \mathcal{B}^\pm_{\mathrm{ext}}(p,U) = \emptyset $ } 
		\arrow[negated,ul,rightarrow,shift right=0.2ex,shorten <=4ex] 
		\arrow[d,leftrightarrow,"\textcircled{\raisebox{0pt}{$6$}}"] & &\\
		& &\fbox{ $\forall p\in M:\ I^\pm(p) = \check I^\pm(p) $} \hphantom{x} 
		\arrow[d,rightarrow,shift right=1ex] 
		\arrow[negated,rr,rightarrow,shift left=0.9ex,"\textcircled{\raisebox{0pt}{$7$}}"] & 
		& \fbox{ \vphantom{$I^\pm$} Push-up holds \hphantom{x}} 
		\arrow[negated,ll,rightarrow,shift left=0.9ex]
		\arrow[negated,lld,rightarrow,shift left=0.9ex,"\hspace*{5em}\textcircled{\raisebox{0pt}{$9$}}"'] & & \\
		& &\fbox{ $\forall p\in M:\ I^\pm(p)$ is open} 
		\arrow[negated,u,rightarrow,shift right=1ex,"\hspace*{3pt}\textcircled{\raisebox{0pt}{$8$}}"'] \hphantom{x} 
		\arrow[negated,rru,rightarrow,shift right=2.8ex] & & & & &\\
	\end{tikzcd}
\end{center}

Here, \textcircled{\raisebox{0.4pt}{\tiny  $1$}} was shown in~\cite[Cor.\ 1.17]{CG:12}. Irreversibility of this 
arrow can be seen by considering a metric that is conformally equivalent to the Minkowski metric via a non-Lipschitz factor. 
Of more interest is the fact that warped products with one-dimensional base (i.e., metrics of the form $-dt^2+f^2 h$ with $h$ any Riemannian metric and $f:(a,b) \to (0,\infty)$ continuous) are causally 
plain for continuous, not necessarily Lipschitz, warping functions $f$, cf.\ \cite{AGKS:19}.
Equivalence \textcircled{\raisebox{0.4pt}{\tiny  $2$}} is Corollary~\ref{cor:2.15}. Irreversibility in
\textcircled{\raisebox{0.4pt}{\tiny  $3$}}, \textcircled{\raisebox{0.4pt}{\tiny  $4$}} 
and \textcircled{\raisebox{0.4pt}{\tiny  $8$}}, as well as the claims in \textcircled{\raisebox{0.4pt}{\tiny  $9$}} follow from~\cite[Ex.\ 1.12]{CG:12} and Example~\ref{number2}, respectively. Equivalence \textcircled{\raisebox{0.4pt}{\tiny  $5$}} 
holds by Theorem~\ref{thm:all_i_equal_for_causally_plain}, while \textcircled{\raisebox{0.4pt}{\tiny  $6$}} is a consequence of 
Theorem~\ref{thm:push-up_ext_bubble}.
The claims in \textcircled{\raisebox{0.4pt}{\tiny  $7$}} follow from~\cite[Ex.\ 1.12]{CG:12}
and Example~\ref{number1}. Finally, Example~\ref{number2} entails irreversibility of \textcircled{\raisebox{0.4pt}{\tiny  $8$}}.

The watershed in this chain of implications is the case of causally plain spacetimes. Indeed, for such metrics all
versions of chronological futures and pasts coincide. 
Moreover, there are no bubble sets, chronological futures and pasts are open and the standard push-up
properties hold. Indeed, it was demonstrated in~\cite[Sec.\ 5.1]{KS:18} that strongly causal and 
causally plain spacetimes form strongly localisable Lorentzian length spaces, implying that their
causality theory is optimal in the sense of synthetic low regularity Lorentzian geometry.

On the other hand, the examples in Section~\ref{sec:counter_examples} demonstrate that continuous but non-Lipschitz spacetimes
display a number of unexpected new causal phenomena that are entirely absent from the causally plain setting.
The most drastic of these is the occurrence of non-open chronological futures/pasts. It is no exaggeration to
state that openness of $I^\pm(p)$ is ubiquitous throughout standard causality theory. For example, Theorem~\ref{th:iplusachronal} (iv) is a property that features prominently in proofs of the singularity theorems of
General Relativity: for the most general version of the causal part of these theorems we refer to~\cite[Sec.\ 2.15]{Min:19} (cf., in particular, the proof of Penrose's singularity theorem~\cite[Thm.\ 65]{Min:19}
that relies on this property in the form of~\cite[Thm.\ 20]{Min:19}). That this condition is (for chronological spacetimes) in fact equivalent to openness of chronological futures/pasts (Theorem~\ref{th:iplusachronal} (i)) 
is a strong indication that it is preferable to adopt a different definition for these sets in regularities below Lipschitz.
Indeed, the arguments put forward in this paper suggest that the optimal strategy for obtaining a
satisfactory causality theory is the following:
\begin{itemize}[itemsep=2mm]
\item In order to derive the maximal benefit from limit curve theorems, set $J^\pm(p):=J^\pm_{\mathcal{AC}}(p) = 
J^\pm_{\mathcal{L}}(p)$ (cf.\ Lemma~\ref{lem:lip_is_ac}).
\item To guarantee openness of chronological futures and pasts and to avoid interior bubbling, 
set $I^\pm(p):=I^\pm_{\copw}(p)$. Then in fact $I^\pm(p)=\check I^\pm(p) = I^\pm_{\cinfty}(p)$
by Lemma~\ref{lem:icheck_is_ic1}.
\end{itemize} 
The first point here is valid across all regularities of the metric. For the second one,
we have seen in Theorem~\ref{thm:all_i_equal_for_causally_plain} that for spacetimes without internal bubbling (in 
particular: for Lipschitz spacetimes) the choice of class of curves 
makes no difference, while the examples in Section~\ref{sec:counter_examples} clearly 
underline the advantage of adhering to this convention for non-Lipschitz metrics. 

While this strategy returns us to the ``unhandy situation in which timelike and causal paths have completely different
properties''~\cite[p.\ 14]{Chr:11}, it appears to us to be the most fitting approach. Moreover, note that, in fact, several fundamental
works on low regularity Lorentzian geometry have adopted these conventions, in particular~\cite{Min:15,Min:19, BS:18} and, in the timelike case,~\cite{GL:17,Sbi:18}. 
On the other hand, in synthetic approaches to low regularity Lorentzian geometry (such as~\cite{KS:18,GKS:18}), where
this strategy is not an option (due to the absence of a differentiable structure), 
phenomena such as the ones laid out in Section~\ref{sec:counter_examples} have to be taken into consideration.

\subsection*{Acknowledgement} We are indebted to two anonymous referees for several remarks that have led to considerable
improvements of the manuscript.


\begin{thebibliography}{GGKS18}

\bibitem[AB08]{AB:08}
S.~B. Alexander and R.~L. Bishop.
\newblock Lorentz and semi-{R}iemannian spaces with {A}lexandrov curvature
  bounds.
\newblock {\em Comm. Anal. Geom.}, 16(2):251--282, 2008.

\bibitem[AGKS19]{AGKS:19}
S.~B. Alexander, M.~Graf, M.~Kunzinger, and C.~S\"amann. 
\newblock Generalized cones as Lorentzian length spaces: Causality, curvature, and singularity theorems. 
\newblock In preparation.
\newblock 2019.

\bibitem[AT04]{AT:04}
L.~Ambrosio and P.~Tilli.
\newblock {\em Topics on analysis in metric spaces}, volume~25 of {\em Oxford
  Lecture Series in Mathematics and its Applications}.
\newblock Oxford University Press, Oxford, 2004.

\bibitem[BEE96]{BEE:96}
J.~K. Beem, P.~E. Ehrlich, and K.~L. Easley.
\newblock {\em Global {L}orentzian geometry}, volume 202 of {\em Monographs and
  Textbooks in Pure and Applied Mathematics}.
\newblock Marcel Dekker Inc., New York, second edition, 1996.

\bibitem[BS18]{BS:18}
P.~Bernard and S.~Suhr.
\newblock Lyapounov {F}unctions of {C}losed {C}one {F}ields: {F}rom {C}onley
  {T}heory to {T}ime {F}unctions.
\newblock {\em Comm. Math. Phys.}, 359(2):467--498, 2018.

\bibitem[Chr91]{Chr:91}
P.~T. Chru{\'s}ciel.
On uniqueness in the large of solutions of {E}instein's equations (``strong cosmic censorship'').
{\em Proceedings of the Centre for Mathematics and its Applications, Australian National University}
27, Canberra 1991.

\bibitem[Chr11]{Chr:11}
P.~T. Chru{\'s}ciel.
\newblock Elements of causality theory.
\newblock 2011.
\newblock arXiv:1110.6706 [gr-qc].

\bibitem[CG12]{CG:12}
P.~T. Chru{\'s}ciel and J.~D.~E. Grant.
\newblock On {L}orentzian causality with continuous metrics.
\newblock {\em Classical Quantum Gravity}, 29(14):145001, 32, 2012.


\bibitem[Fat15]{F:15}
A.~Fathi.
\newblock Time functions revisited.
\newblock {\em Int. J. Geom. Methods Mod. Phys.}, 12(8):1560027, 12, 2015.

\bibitem[FS12]{FS:12}
A.~Fathi and A.~Siconolfi.
\newblock On smooth time functions.
\newblock {\em Math. Proc. Cambridge Philos. Soc.}, 152(2):303--339, 2012.

\bibitem[GL17]{GL:17}
G.~J. Galloway and E.~Ling.
\newblock Some remarks on the {$C^0$}-(in)extendibility of spacetimes.
\newblock {\em Ann. Henri Poincar\'e}, 18(10):3427--3447, 2017.

\bibitem[GLS18]{GLS:18}
G.~J. Galloway, E.~Ling, and J.~Sbierski.
\newblock Timelike completeness as an obstruction to {$C^0$}-extensions.
\newblock {\em Comm. Math. Phys.}, 359(3):937--949, 2018.

\bibitem[Ger70]{Ger:70}
R.~Geroch.
\newblock Domain of dependence.
\newblock {\em J. Mathematical Phys.}, 11:437--449, 1970.

\bibitem[GGKS18]{GGKS:18}
M.~Graf, J.~D.~E. Grant, M.~Kunzinger, and R.~Steinbauer.
\newblock The {H}awking--{P}enrose {S}ingularity {T}heorem for
  {$C^{1,1}$}-{L}orentzian {M}etrics.
\newblock {\em Comm. Math. Phys.}, 360(3):1009--1042, 2018.

\bibitem[GL18]{GL:18}
M.~Graf and E.~Ling.
\newblock Maximizers in {L}ipschitz spacetimes are either timelike or null.
\newblock {\em Classical Quantum Gravity}, 35(8):087001, 6, 2018.

\bibitem[GKS19]{GKS:18}
J.~D.~E. Grant, M.~Kunzinger, and C.~S{\"a}mann.
\newblock Inextendibility of spacetimes and {L}orentzian length spaces.
\newblock {\em Ann.\ Glob.\ Anal.\  Geom.} 55:133-147, 2019.
 

\bibitem[Har82]{Har:82}
S.~G. Harris.
\newblock A triangle comparison theorem for {L}orentz manifolds.
\newblock {\em Indiana Univ. Math. J.}, 31(3):289--308, 1982.

\bibitem[HE73]{HE:73}
S.~W. Hawking and G.~F.~R. Ellis.
\newblock {\em The large scale structure of space-time}.
\newblock Cambridge University Press, London-New York, 1973.
\newblock Cambridge Monographs on Mathematical Physics, No. 1.

\bibitem[HW51]{HW:51}
P.~Hartman and A.~Wintner.
\newblock On the problems of geodesics in the small.
\newblock {\em Amer. J. Math.}, 73:132--148, 1951.

\bibitem[Kri99]{Kri:99}
M.~Kriele.
\newblock {\em Spacetime}, volume~59 of {\em Lecture Notes in Physics. New
  Series m: Monographs}.
\newblock Springer-Verlag, Berlin, 1999.
\newblock Foundations of general relativity and differential geometry.

\bibitem[KS18]{KS:18}
M.~Kunzinger and C.~S\"amann.
\newblock Lorentzian length spaces.
\newblock {\em Ann.\ Glob.\ Anal.\ Geom.} 54(3):399--447, 2018.

\bibitem[KSS14]{KSS:14}
M.~Kunzinger, R.~Steinbauer, and M.~Stojkovi{\'c}.
\newblock The exponential map of a {$C^{1,1}$}-metric.
\newblock {\em Differential Geom. Appl.}, 34:14--24, 2014.

\bibitem[KSSV14]{KSSV:14}
M.~Kunzinger, R.~Steinbauer, M.~Stojkovi{\'c}, and J.~A. Vickers.
\newblock A regularisation approach to causality theory for
  {$C^{1,1}$}-{L}orentzian metrics.
\newblock {\em Gen. Relativity Gravitation}, 46(8):Art. 1738, 18, 2014.

\bibitem[KSSV15]{KSSV:15}
M.~Kunzinger, R.~Steinbauer, M.~Stojkovi{\'c}, and J.~A. Vickers.
\newblock Hawking's singularity theorem for {$C^{1,1}$}-metrics.
\newblock {\em Classical Quantum Gravity}, 32(7):075012, 19, 2015.

\bibitem[KSV15]{KSV:15}
M.~Kunzinger, R.~Steinbauer, and J.~A. Vickers.
\newblock The {P}enrose singularity theorem in regularity {$C^{1,1}$}.
\newblock {\em Classical Quantum Gravity}, 32(15):155010, 12, 2015.

\bibitem[Lec16]{Lec:16}
A.~Lecke.
\newblock {\em Non-smooth Lorentzian geometry and causality theory}.
\newblock PhD thesis, University of Vienna, Faculty of Mathematics, 2016.

\bibitem[Min08]{Min:08a}
E.~Minguzzi.
\newblock Limit curve theorems in {L}orentzian geometry.
\newblock {\em J. Math. Phys.}, 49(9):092501, 18, 2008.

\bibitem[Min15]{Min:15}
E.~Minguzzi.
\newblock Convex neighborhoods for {L}ipschitz connections and sprays.
\newblock {\em Monatsh. Math.}, 177(4):569--625, 2015.

\bibitem[Min19]{Min:19}
E.~Minguzzi.
\newblock Causality theory for closed cone structures with applications.
\newblock {\em Reviews in Mathematical Physics}, 31(5):1930001, 139, 2019.

\bibitem[MS08]{MS:08}
E.~Minguzzi and M.~S{\'a}nchez.
\newblock The causal hierarchy of spacetimes.
\newblock In {\em Recent developments in pseudo-{R}iemannian geometry}, ESI
  Lect. Math. Phys., pages 299--358. Eur. Math. Soc., Z\"urich, 2008.

\bibitem[Nat55]{Nat:55}
I.~P. Natanson.
\newblock {\em Theory of functions of a real variable}.
\newblock Frederick Ungar Publishing Co., New York, 1955.
\newblock Translated by Leo F. Boron with the collaboration of Edwin Hewitt.

\bibitem[O'N83]{ONe:83}
B.~O'Neill.
\newblock {\em Semi-{R}iemannian geometry with applications to relativity},
  volume 103 of {\em Pure and Applied Mathematics}.
\newblock Academic Press, Inc. [Harcourt Brace Jovanovich, Publishers], New
  York, 1983.

\bibitem[Pen72]{Pen:72b}
R.~Penrose.
\newblock {\em Techniques of differential topology in relativity}.
\newblock Society for Industrial and Applied Mathematics, Philadelphia, Pa.,
  1972.
\newblock Conference Board of the Mathematical Sciences Regional Conference
  Series in Applied Mathematics, No. 7.

\bibitem[S{\"a}m16]{Sae:16}
C.~S{\"a}mann.
\newblock Global hyperbolicity for spacetimes with continuous metrics.
\newblock {\em Ann. Henri Poincar\'e}, 17(6):1429--1455, 2016.

\bibitem[Sbi18a]{Sbi:18}
J.~Sbierski.
\newblock The {$C^0$}-inextendibility of the {S}chwarzschild spacetime and the
  spacelike diameter in {L}orentzian geometry.
\newblock {\em J. Differential Geom.}, 108(2):319--378, 2018.

\bibitem[Sbi18b]{Sbi:Proc}
J.~Sbierski.
\newblock On the proof of the {$C^0$}-inextendibility of the {S}chwarzschild
  spacetime.
\newblock {\em Journal of Physics: Conference Series}, 968(1):012012, 2018.

\bibitem[SS18]{SS:18}
C.~S\"{a}mann and R.~Steinbauer.
\newblock On geodesics in low regularity.
\newblock {\em Journal of Physics: Conference Series}, 968(1):012010, 2018.

\bibitem[Wal84]{Wal:84}
R.~M. Wald.
\newblock {\em General relativity}.
\newblock University of Chicago Press, Chicago, IL, 1984.

\end{thebibliography}
\end{document}